\documentclass[11pt]{article}

\usepackage[utf8]{inputenc}
\usepackage[a4paper,margin=0.9in]{geometry}

\usepackage{amsmath,amssymb,amsfonts,amsthm,mathtools}
\usepackage{stmaryrd}

\usepackage{graphicx}
\usepackage{float}
\usepackage{array,booktabs,longtable}
\usepackage{caption}
\usepackage{subcaption}
\graphicspath{{./image/}}
\DeclareMathAlphabet{\mathcal}{OMS}{cmsy}{m}{n}

\usepackage{tikz}
\usepackage{tikz-cd}
\usetikzlibrary{arrows,positioning,matrix}
\usepackage[all,cmtip]{xy}
\usepackage{adjustbox}

\usepackage[inline]{enumitem}

\usepackage[noadjust]{cite}
\PassOptionsToPackage{hyphens}{url}
\usepackage{hyperref}

\usepackage{mathptmx}
\usepackage[dvipsnames]{xcolor}

\usepackage{todonotes}

\makeatletter
\newcommand{\inlineitem}[1][]{%
\ifnum\enit@type=\tw@
    {\descriptionlabel{#1}}
  \hspace{\labelsep}%
\else
  \ifnum\enit@type=\z@
       \refstepcounter{\@listctr}\fi
    \quad\@itemlabel\hspace{\labelsep}%
\fi}
\makeatother

\tikzcdset{scale cd/.style={every label/.append style={scale=#1},
    cells={nodes={scale=#1}}}}

\theoremstyle{definition}

\newtheorem{theorem}{Theorem}[section]
\newtheorem*{theorem*}{Theorem}

\newtheorem*{corollary*}{Corollary}
\newtheorem{lemma}[theorem]{Lemma}
\newtheorem{proposition}[theorem]{Proposition}

\newtheorem{definition}[theorem]{Definition}

\newtheorem{theoremL}{Theorem}

\theoremstyle{remark}
\newtheorem*{remark}{Remark}

\title{Brunnian spanning 3-disks for the 2-unlink in the 4-sphere}
\author{Weizhe Niu}
\date{}

\begin{document}
\maketitle

\begin{abstract}
We show that the $2$-component unlink in $S^4$ admits infinitely many isotopy classes of spanning $3$-disks that are Brunnian.
\end{abstract}

\section{Introduction}\label{sec:intro}

Let $L\subset S^4$ be an $m$-component $2$-link. A \emph{collection of spanning $3$-disks} for $L$ is a smoothly embedded union
$$
D=\bigsqcup_{i=1}^m D_i^3\subset S^4
\qquad\text{with}\qquad
\partial D=L.
$$
Let $U_m$ denote the standard unlink with $m$ components in $S^4$. The complement of an open tubular neighbourhood of $U_m$ is diffeomorphic to
$\#_m S^1\times D^3$ which we write as $\#_{i=1}^m A_i$ with $A_i=S^1\times D^3$. In the complement, there is a standard family of spanning disks
$$
D_m^{\mathrm{std}}=\{D_i\}_{i=1,\dots,m},
\qquad
D_i=\{pt\}\times D^3\subset A_i.
$$
For $m=1$, \cite{Budney-gabai} provides infinitely many non-isotopic spanning disks for the unknot. We focus on the case $m=2$ and write $D^{\mathrm{std}}:=D_2^{\mathrm{std}}$.

\begin{figure}
    \centering
    \includegraphics[width=0.5\linewidth]{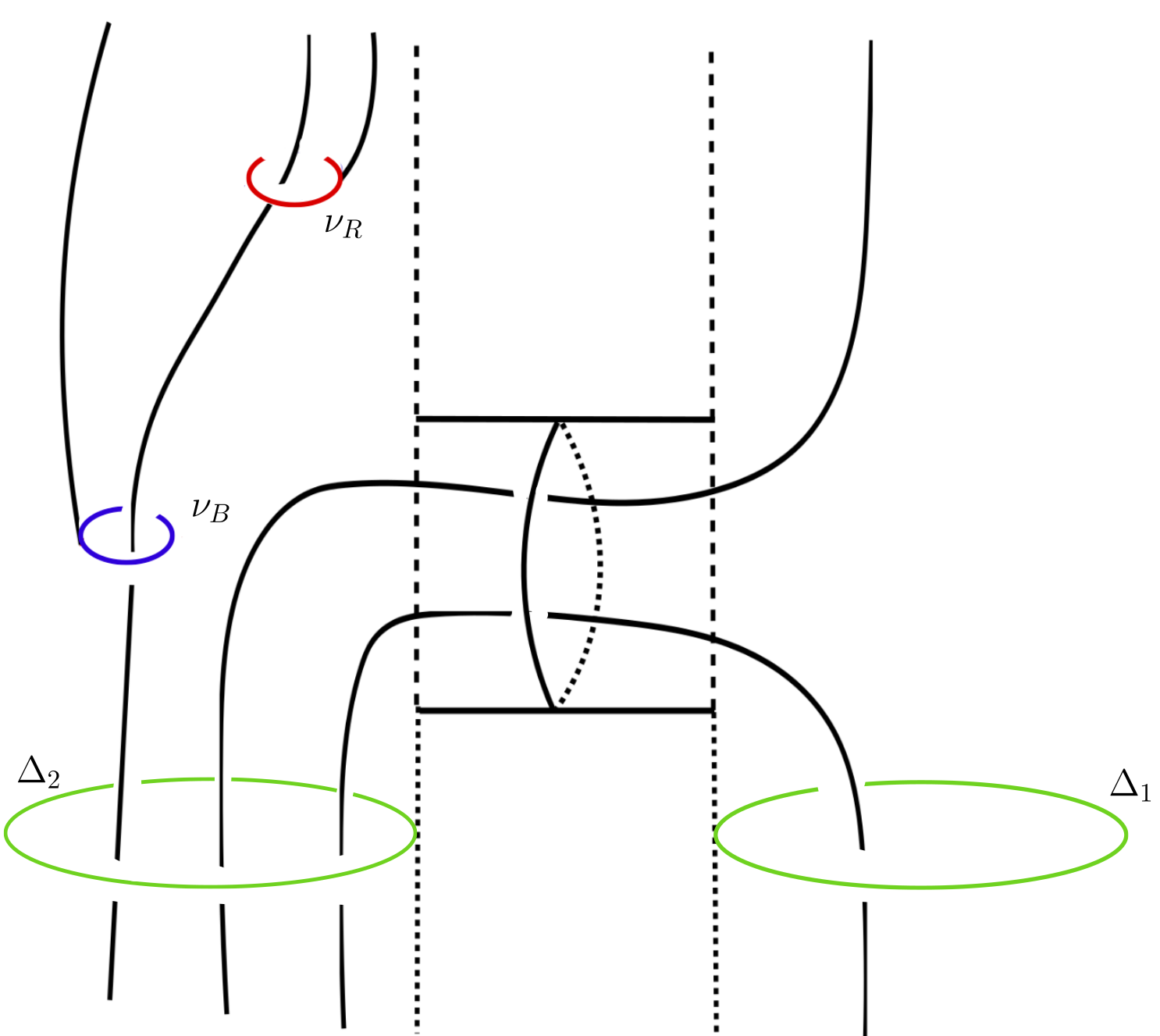}
    \caption{The barbell $\mathcal{B}(t\nu_B\nu_R tut^{-1})$ embedded in $\natural_2 S^1\times D^3\subset \#_2 S^1\times D^3$.}
    \label{extra}
\end{figure}

In this paper, we construct infinitely many pairs of non-isotopic spanning disks for the unlink $U_2$ that are \textbf{Brunnian} in the sense that each of the two disks is itself isotopically standard. They arise as images of the standard disks under the barbell diffeomorphisms
$$
\Phi_{\mathcal{B}(t\nu_B\nu_R tu^kt^{-1})}\in \pi_0\mathrm{Diff}(\#_2S^1\times D^3,\partial),
\qquad
k\in \mathbb{Z}^{+},
$$
defined in \cite{niu2025mappingclassgroup4dimensional}
(cf.\ Proposition 5.12 of \cite{niu2025mappingclassgroup4dimensional}; see Section~\ref{sec:dax} for a brief review of the barbell picture). See Figure \ref{extra} for the case $k=1$. The key input is the generalized $W_3$ invariant from \cite{niu2025mappingclassgroup4dimensional}:
$$W_3^{\Delta}\colon \pi_0\mathrm{Diff}(\natural_2 S^1\times D^3,\partial)\to \Lambda\coloneqq\mathbb{Q}\langle t_1,t_3,u_1,u_3\rangle/\mathcal{H}$$
which depends on a choice of a properly
embedded $3$-ball $\Delta\subset X=\natural_2 S^1\times D^3$. Here $\mathcal{H}$ denotes the Hexagon relations defined in Theorem 3.16 of \cite{niu2025mappingclassgroup4dimensional}; see \ref{eq:HexagonElement}). We take
$$
\Delta_1=D_1,\qquad \Delta_2=D_2,
$$
and write the corresponding invariants as $W_3^{\Delta_1}$ and $W_3^{\Delta_2}$. Previously, the generalization of $W_3$ is mainly seen in boundary-connected sums, see for example \cite{lin2025mappingclassgroups4manifolds} and \cite{niu2025mappingclassgroup4dimensional}. Using the relationship between
$$
X=\natural_2 S^1\times D^3
\qquad\text{and}\qquad
Y=\#_2 S^1\times D^3,
$$
we are able to obtain an induced invariant $W_3'^{\Delta_i}$ on a subgroup of $\pi_0\mathrm{Diff}(Y,\partial)$ (Section~\ref{sec:setup}) which we use to detect the barbell diffeomorphisms in an internal connected sum $Y$. The main results of this paper are:

\begin{theoremL}\label{thm:A}
For $k\ge 1$, the barbell diffeomorphisms
$\Phi_{\mathcal{B}(t\nu_B\nu_R tu^kt^{-1})}\in \pi_0\mathrm{Diff}(\#_2S^1\times D^3,\partial)$ are non-trivial and pairwise non-isotopic. Moreover, they are detected by the induced invariants $W_3'^{\Delta_i}$ (equivalently, by
$W_3^{\Delta_i}$ via the construction of Section~\ref{sec:setup}).
\end{theoremL}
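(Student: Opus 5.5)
The plan is to reduce the problem in $Y=\#_2S^1\times D^3$ to a computation in $X=\natural_2 S^1\times D^3$, where the generalized invariant $W_3^{\Delta}$ of \cite{niu2025mappingclassgroup4dimensional} is defined.

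\textbf{Step 1: pass from $Y$ to $X$.} The interior connected sum $Y$ is obtained from $X$ by attaching a $3$-handle along the separating $2$-sphere in $\partial X$ (equivalently, $X$ is $Y$ with a neighbourhood of an arc joining the two boundary components removed). I will take the subgroup $G\subset\pi_0\mathrm{Diff}(Y,\partial)$ of classes represented by diffeomorphisms supported in the copy of $X\subset Y$. I then want to define $W_3'^{\Delta_i}$ on $G$ by choosing a lift to $\pi_0\mathrm{Diff}(X,\partial)$ and applying $W_3^{\Delta_i}$.

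The indeterminacy in this lift comes from the kernel of $\pi_0\mathrm{Diff}(X,\partial)\to\pi_0\mathrm{Diff}(Y,\partial)$. This kernel is generated by point-pushing and arc-pushing classes: loops in the space of embedded arcs, or in the space of embedded $3$-balls of the attaching region. I will show $W_3'^{\Delta_i}$ is well defined by computing $W_3^{\Delta_i}$ on these generators and taking the quotient of $\Lambda$ by the resulting subgroup $\mathcal{K}$. I expect these generators to be expressible as barbells whose $W_3$ values are explicit and lie in a small, explicitly describable subspace, for instance words that do not mix the $t$- and $u$-letters in the required pattern. The target of the induced invariant is then $\Lambda/\mathcal{K}$.

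\textbf{Step 2: compute the invariants on the family.} The barbell $\mathcal{B}(t\nu_B\nu_R tu^kt^{-1})$ lies in $X$, as Figure~\ref{extra} shows for $k=1$. Using the barbell formula for $W_3^{\Delta}$ from \cite{niu2025mappingclassgroup4dimensional} (cf.\ Proposition 5.12), I will compute $W_3^{\Delta_1}$ and $W_3^{\Delta_2}$ as explicit elements of $\Lambda$. Each should be a signed sum of monomials in $t_1,t_3,u_1,u_3$ whose exponents involve $k$, such as terms containing $u_3^{k}$ or $u_1^{k}$, read off from the intersections of the two barbell spheres with $\Delta_i$.

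\textbf{Step 3: show the values are distinct and nonzero modulo the relations.} The values must be nonzero and pairwise distinct modulo both $\mathcal{H}$ and $\mathcal{K}$. My plan is to produce a linear functional on $\mathbb{Q}\langle t_1,t_3,u_1,u_3\rangle$ that kills the Hexagon relations (\ref{eq:HexagonElement}) and $\mathcal{K}$ but separates the different $k$. A natural choice is projection onto the coefficients of monomials with a specified $u$-degree, symmetrized over the hexagon action. Since the $u$-degree grows with $k$, the invariants are pairwise distinct, which gives non-isotopy and nontriviality.

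\textbf{Main obstacle.} I expect the hardest step to be Step 1, the control of the kernel $\mathcal{K}$. I would first try to show that every element of $\mathcal{K}$ has total $u$-degree at most $1$, or is antisymmetric under the hexagon action, so that the functional in Step 3 vanishes on it.
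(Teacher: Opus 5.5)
Your Steps 1 and 2 follow the paper's architecture: lift along $e$ on $\ker(r)=\mathrm{im}(e)$, quotient by $\mathcal{K}=(W_3^{\Delta_i}\circ p)(\pi_1\mathrm{Emb})$, and use the barbell formulas to get $T_4(t^{-1},tu^{-k}t^{-1})$ and $2T_4+T_6$. However, Step 3 and your plan for controlling $\mathcal{K}$ cannot work as stated.

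The bound you hope for, that $\mathcal{K}$ has $u$-degree at most $1$, is false. By Kosanovi\'c's Dax computation (using $\pi_2Y=0$), $\pi_1\mathrm{Emb}(I,Y)\cong(\mathbb{Z}/2)[\pi_1Y\setminus 1]$ has a generator for every nontrivial word, e.g.\ $t^{x}u^{y}t^{z}$. The induced barbells $t^{x}\nu_R\nu_Bu^{y}\nu_B^{-1}\nu_R^{-1}t^{z}$ have $W_3$-values built from admissible pieces $T_i(\overline a,\overline c)$ involving $u^{\pm y}$, with $|y|$ arbitrary. The control actually available is $\mathcal{K}\subseteq\mathcal{S}$, the span of $T_i(\overline a,\overline c)$ over admissible $(a,c)$ and $i\in\{1,3,4,6\}$ (Proposition~\ref{prop:admissible-span}).

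Against $\mathcal{S}$, no functional built from $u$-degrees can detect your targets. Total $u$-degree vanishes on them outright: all eight monomials in \eqref{eq:T4expandedk} have total $u$-degree $-k$, four with each sign, and $T_6$ cancels degree by degree in the same way. Refining to the full exponent vector in $t_1,u_1,t_3,u_3$ does not help either. The words $tu^kt^{-1}$ and $u^k$ have the same abelianization, and $(t,u^k)$ is admissible. So $T_4(t^{-1},tu^{-k}t^{-1})$ and $T_4(t^{-1},u^{-k})\in\mathcal{S}$ have identical images in $\mathbb{Q}[\mathbb{Z}^4]$, and likewise for $2T_4+T_6$. The hexagon relations are not the obstruction: a functional antisymmetrized under $(x;y)\mapsto(-y;-x)$ on the exponent vectors of the $1$- and $3$-parts does kill every $H(\nu,\mu)$. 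The obstruction is $\mathcal{S}$.

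What separates the family from $\mathcal{S}$ is non-commutative. It comes from $(a,c)=(t,tu^kt^{-1})$, where $a$ ends and $c$ begins with a power of $t$, whereas admissible pairs switch letters at the junction. So you need a functional that reads reduced words in $F=\langle t_1,u_1,t_3,u_3\rangle$. The paper uses $\Psi_k=\mathrm{coeff}_{m_1(k)}-\mathrm{coeff}_{m_2(k)}$ with $m_1(k)=t_1^{-1}t_3u_3^{-k}t_3^{-2}$ and $m_2(k)=t_1^{2}u_1^{k}t_1^{-1}t_3$.

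\begin{itemize}
\item $\Psi_k$ kills $\mathcal{H}$. Because $F$ is the free product of the subgroups on the $1$-letters and the $3$-letters, setting a term of $H(\nu,\mu)$ equal to some $m_j(k)$ determines $(\nu,\mu)$ uniquely. In each case another term of the same sign equals the other $m_j(k)$, while the remaining two terms equal neither.
\item $\Psi_k$ kills $\mathcal{S}$. The same word-level analysis shows that every solution of $M(a,c)=m_j(k)$, for $M$ a monomial of $T_1,T_3,T_4,T_6$, has $a$ ending and $c$ beginning with $t^{\pm1}$, so it is not admissible.
\item $\Psi_k$ separates the family. It takes the value $\delta_{kj}$ on $T_4(t^{-1},tu^{-j}t^{-1})$ and $3\delta_{kj}$ on $2T_4(t^{-1},tu^{-j}t^{-1})+T_6(t^{-1},tu^{-j}t^{-1})$.
\end{itemize}

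The last point gives nonvanishing and linear independence in the quotient, hence nontriviality and pairwise non-isotopy.
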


\begin{theoremL}\label{cor:B}
The unlink $U_2$ admits infinitely many collections of Brunnian spanning disks
$$
\Phi_{\mathcal{B}(t\nu_B\nu_R tu^kt^{-1})}(D^{\mathrm{std}})\qquad (k=1,2,\dots),
$$
which are pairwise non-isotopic.
\end{theoremL}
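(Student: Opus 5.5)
The plan is to derive Theorem~\ref{cor:B} from Theorem~\ref{thm:A} in two parts: the Brunnian property, and pairwise non-isotopy of the collections $D^{(k)}:=\Phi_k(D^{\mathrm{std}})$, where $\Phi_k:=\Phi_{\mathcal{B}(t\nu_B\nu_R tu^kt^{-1})}$.

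\emph{Brunnian property.} I will show that each single disk $\Phi_k(D_i)$ is isotopic rel boundary to $D_i$. The barbell $\mathcal{B}(t\nu_B\nu_R tu^kt^{-1})$ has cuffs and bar that link both summands $A_1,A_2$. The strategy is to forget one disk, say $D_2$. Then the complement of $D_1\cup U_2$ no longer contains $D_2$ as an obstruction, and the bar word may be simplified by isotopies that cross $D_2$. Concretely, the generator $u$ corresponds to the $A_2$ loop. Once $D_2$ is no longer required to be fixed, the factor $tu^kt^{-1}$ can be unwound: one cuff's meridian $\nu_R$ becomes nullhomotopic (indeed null-isotopic) in the complement of $D_1$ alone. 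By the standard fact that a barbell with a cuff bounding an embedded disk disjoint from the rest yields a diffeomorphism isotopic to the identity on that relevant submanifold, $\Phi_k(D_1)\simeq D_1$. The argument for $D_2$ is symmetric, using $\nu_B$. I will carry this out using the barbell moves from the review in Section~\ref{sec:dax}, cf.\ Proposition~5.12 of \cite{niu2025mappingclassgroup4dimensional}.

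\emph{Non-isotopy.} Suppose $D^{(k)}$ is isotopic to $D^{(l)}$ rel boundary. By isotopy extension, $\Phi_l^{-1}\Phi_k$ is isotopic rel $\partial$ to a diffeomorphism $\psi$ fixing $D^{\mathrm{std}}$ pointwise, and we may also arrange that it fixes a neighbourhood of $D^{\mathrm{std}}$. Cutting $Y$ along $D^{\mathrm{std}}$ yields $S^4$ minus balls, which is $D^4$ with boundary identified. Hence $\psi$ lies in the image of $\pi_0\mathrm{Diff}(D^4,\partial)$ (or of the stabilizer group of the disks), i.e.\ in the subgroup on which $W_3'^{\Delta_i}$ is defined. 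There the invariant must vanish, or at least take a value independent of choices, since its definition via $X=\natural_2S^1\times D^3$ only sees the $\pi_1$-decorated Dax-type data supported away from $\Delta_i=D_i$.

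\emph{Invariant computation.} By additivity of $W_3'^{\Delta_i}$ on the subgroup, we get $W_3'^{\Delta_i}(\Phi_k)=W_3'^{\Delta_i}(\Phi_l)$. Theorem~\ref{thm:A} gives that these values are distinct for $k\ne l$, as monomials in $u^k$ that are independent modulo $\mathcal{H}$, which is a contradiction. I expect the main obstacle to be the well-definedness step: showing that diffeomorphisms fixing $D^{\mathrm{std}}$ have trivial $W_3'^{\Delta_i}$, i.e.\ that the ambiguity in the passage $X\leftrightarrow Y$ from Section~\ref{sec:setup} is exactly generated by such maps. I would attack this first by identifying the kernel of $\pi_0\mathrm{Diff}(X,\partial)\to\pi_0\mathrm{Diff}(Y,\partial)$ with diffeomorphisms supported near $\Delta_i$, on which $W_3^{\Delta_i}$ vanishes by construction.
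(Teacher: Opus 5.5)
\textbf{There is a genuine gap in the non-isotopy step, and the Brunnian argument uses the wrong mechanism.}

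\textbf{Non-isotopy.} Your argument hinges on the claim that $W_3'^{\Delta_i}$ vanishes on every class of $\ker(r)$ represented by a diffeomorphism $\psi$ fixing $\nu(D^{\mathrm{std}})$. You do not prove this, and the attack you propose would fail.
\begin{itemize}
\item $\ker(e)=p(\pi_1\mathrm{Emb}(I\times D^3,Y;\mathcal U\times D^3))$ is generated by the barbells of Section~\ref{sec:dax}. These are not supported near $\Delta_i$, and their $W_3^{\Delta_i}$-values need not vanish; that is precisely why $W_3'^{\Delta_i}$ lands in a quotient.
\item The lift ambiguity is beside the point. You need one lift with value $0$, and ``a value independent of choices'' is not enough.
\item $W=Y\setminus\nu(D^{\mathrm{std}})\cong D^4\# D^4\cong S^3\times I$, not $D^4$. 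Its mapping class group rel boundary contains the $S^3$ sphere twist in addition to the image of $\pi_0\mathrm{Diff}(D^4,\partial)$.
\end{itemize}

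The claim can be proved using the arc $\mathcal U$ instead. Since $W$ is simply connected, $\psi(\mathcal U)$ is homotopic, hence isotopic (arcs in dimension $4$), to $\mathcal U$ rel endpoints inside $W$. The only remaining obstruction is a framing class in $\pi_1SO(3)=\mathbb Z/2$, and it vanishes for $\psi^2$. So $\psi^2$ is isotopic rel $\nu(D^{\mathrm{std}})$ to a map fixing $\nu(\mathcal U)$. Its restriction to $X$ is a lift along $e$ that fixes $\Delta_i$. Since $W_3^{\Delta_i}=d\circ s$ depends only on the image ball, this lift has $W_3^{\Delta_i}=d([\Delta_i])=0$. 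Hence $2W_3'^{\Delta_i}([\psi])=0$, so $W_3'^{\Delta_i}([\psi])=0$ over $\mathbb Q$.

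Fixing \emph{both} disks is essential here: with only $D_1$ fixed, the complement has $\pi_1\cong\mathbb Z$ and the arc can be dragged around.

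You then need $W_3'^{\Delta_i}(\Phi_k)\neq W_3'^{\Delta_i}(\Phi_l)$ modulo $\mathcal H+\mathcal S$, not merely modulo $\mathcal H$. The functional $\Psi_k$ gives this: it kills $\mathcal H$ and $\mathcal S$ and takes the value $1$ (resp.\ $3$) on the difference.

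Once repaired, your route genuinely differs from the paper's. The paper caps both boundary components to get $S^4$, uses that the barbell dies there, and applies Lucas' disk slide group with $\pi_1S^4=1$ to get $[\Phi_k]^2=1$, contradicting infinite order. Your route needs neither of those inputs and treats $k\neq l$ directly.

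\textbf{Brunnian property.} Forgetting $D_2$ while remaining in $Y$ changes nothing: $D_2$ never obstructed isotopies of the barbell, and $u$ is still nontrivial in $\pi_1Y$. A single disk $\Phi_k(D_j)$ spans only one component, so it must be compared in $S^4\setminus\nu(L_j)$. That is, you must glue the neighbourhood $S^2\times D^2$ of the other component back in, which kills the corresponding generator of $\pi_1$.

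The two cappings turn the barbell into $\mathcal B(t\nu_B\nu_R)$ and $\mathcal B(\nu_B\nu_Ru^k)$ respectively, so the two cases are not symmetric. Both are trivial in $\pi_0\mathrm{Diff}(S^1\times D^3,\partial)$. Hence the extended diffeomorphism is isotopic to the identity and carries the remaining disk to an isotopic one. This is the paper's argument; no cuff meridian needs to become null-isotopic.
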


To see these collections are Brunnian, observe that capping off $A_i$ by gluing a copy of $S^2\times D^2$ gives rise to 
\begin{enumerate}
    \item For $i=1$, the resulting barbell is isotopic to $\mathcal{B}(t\nu_B\nu_R)$.
    \item For $i=2$, the resulting barbell is isotopic to $\mathcal{B}(\nu_B\nu_Ru^k)$.
\end{enumerate}
In both cases, the above barbells are trivial in $S^1\times D^3$ (cf.~Section 2 of \cite{niu2025mappingclassgroup4dimensional}).

\begin{remark}
    These spanning disks become isotopically standard when viewed in $S^4\subset D^5$ since the barbells $\mathcal{B}(t\nu_B\nu_R tu^kt^{-1})$ can be unlinked. This agrees with the result by \cite{powell2025spanning3discs4spherepushed}.
\end{remark}

\medskip
The paper is organized as follows. Section~\ref{sec:setup} relates spanning disks to diffeomorphisms of $Y$ and constructs the induced invariant $W_3'^{\Delta_i}$. Section~\ref{sec:dax} recalls the Dax invariant and explains how the relevant embedding subgroup produces barbell generators; in particular,
the corresponding $W_3$-values lie in the admissible span of Proposition~\ref{prop:admissible-span}.
Section~\ref{sec:computations} constructs coefficient functionals on the $W_3$-target (modulo hexagon relations) which vanish on the admissible span but detect the family $t\nu_B\nu_Rtu^kt^{-1}$, completing the proof of Theorem~\ref{thm:A}. Finally, Section~\ref{thbproof} proves Theorem \ref{cor:B}.
\vspace{3mm}
\par\noindent\textbf{Acknowledgement.} The author thanks Mark Powell for drawing his attention to this question, and Jianfeng Lin and Brendan Owens for helpful comments.

\section{From spanning disks to embedded $3$-balls in $X$}\label{sec:setup}

Let
$$
X=S^1\times D^3\natural S^1\times D^3,
\qquad
Y=S^1\times D^3\# S^1\times D^3.
$$
Figure~\ref{boundarysumrelation} schematically depicts the relation between $X$ and $Y$ (drawn one dimension lower).
Let $\mathcal{U}$ be a standard properly embedded arc in $Y$ as depicted in Figure~\ref{boundarysumrelation}. Then $X$ is obtained from $Y$
by drilling out a neighbourhood of $\mathcal{U}$. Conversely, attaching a $3$-handle to $X$ along a neighbourhood of the boundary 2-sphere of a slice of the connected sum neck gives $Y$. The green part represents this attachment (drawn as a $3$-dimensional $2$-handle) in Figure \ref{boundarysumrelation}.

\begin{figure}[t]
    \centering
    \includegraphics[width=0.42\textwidth]{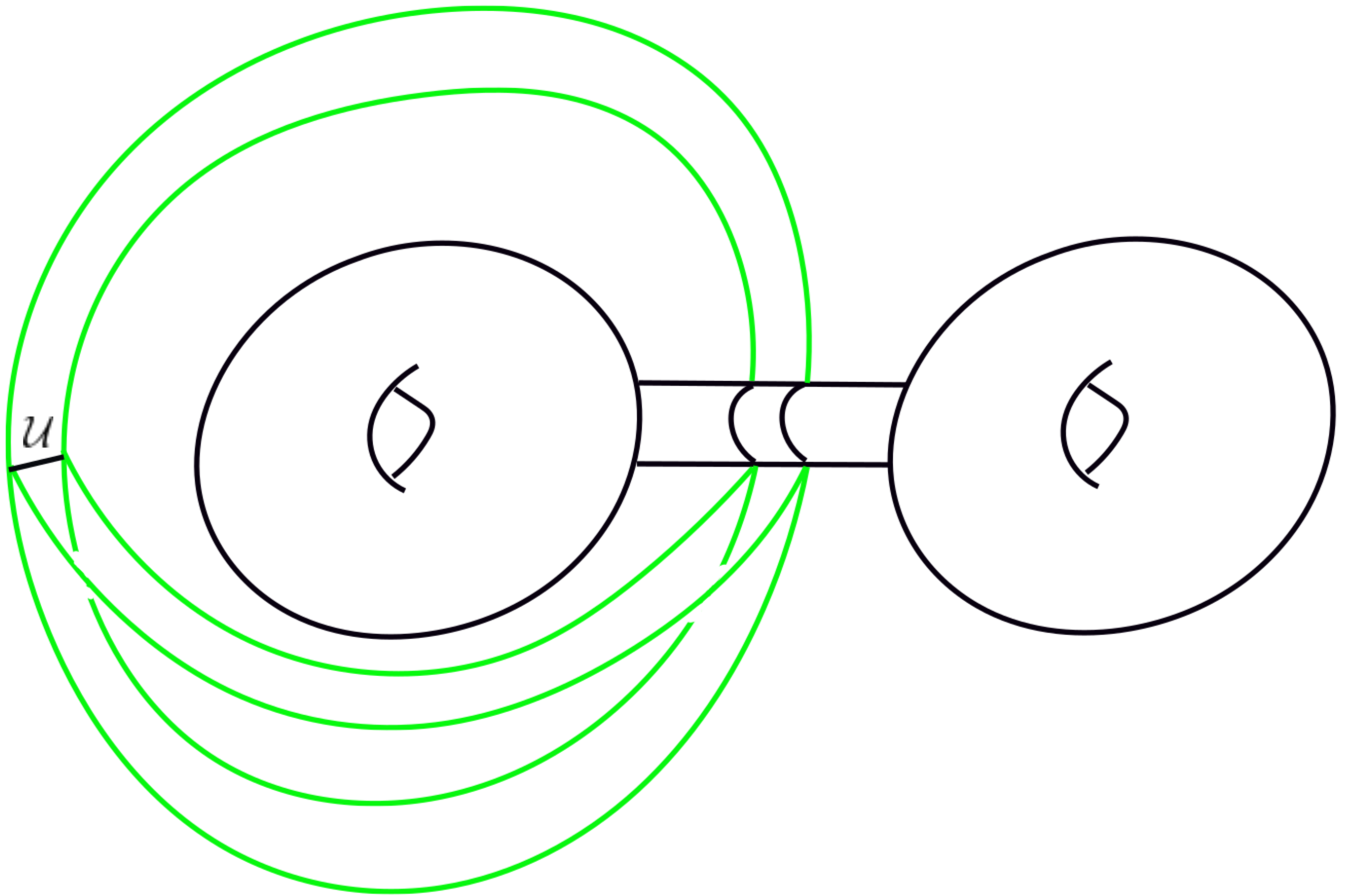}
    \caption{The relationship between $S^1\times D^3 \natural S^1\times D^3$ and $S^1\times D^3 \# S^1\times D^3$.}
    \label{boundarysumrelation}
\end{figure}

We denote by $\mathrm{Emb}(I\times D^3,Y;\mathcal{U}\times D^3)$ the path component of the space of properly embedded copies of $I\times D^3$ in $Y$ which agree with $\mathcal{U}\times D^3$ on $\{0,1\}\times D^3$, and which contains $\mathcal{U}\times D^3$.
Consider the fibration given by restricting a diffeomorphism of $Y$ to the fixed neighbourhood $\mathcal{U}\times D^3$:
$$
\mathrm{Diff}(X,\partial)\longrightarrow \mathrm{Diff}(Y,\partial)
\longrightarrow \mathrm{Emb}(I\times D^3,Y;\mathcal{U}\times D^3).
$$
The last few terms of the induced long exact sequence of homotopy groups are:
$$
\dots \to \pi_1\mathrm{Emb}(I\times D^3,Y;\mathcal{U}\times D^3)\xrightarrow{p}
\pi_0\mathrm{Diff}(X,\partial)\xrightarrow{e}\pi_0\mathrm{Diff}(Y,\partial)
\xrightarrow{r}\pi_0\mathrm{Emb}(I\times D^3,Y;\mathcal{U}\times D^3)\to \dots,
$$
where the map $p$ is given by isotopy extension and the map $e$ is given by extending diffeomorphisms of $X$ to $Y$ using the identity. For each $i=1,2$, we summarize the situation in the diagram below. Here
$$
\pi_0 \mathrm{Emb}(D^3,X)\coloneqq\pi_0 \mathrm{Emb}(D^3,X;\partial\Delta_i),
$$
denotes the set of isotopy classes of embedded $3$-balls in $X$ whose boundaries coincide with $\partial \Delta_i$, and similarly for $\pi_0 \mathrm{Emb}(D^3,Y)$. The map $l$ is the natural inclusion. We drop $\partial$ in the diagram to save space, but all mapping class groups shown in the diagram are boundary-preserving. The maps $s$ and $\phi$ are given by applying diffeomorphisms to $\Delta_i$, and the lower left map $q$ is the quotient map. The composition $d\circ s$ is $W_3^{\Delta_i}$.

\adjustbox{scale=1,center}{
\begin{tikzcd}
 \pi_1\mathrm{Emb}(I\times D^3,Y;\mathcal{U}\times D^3)\arrow{r}{p}\arrow{d}{W_3^{\Delta_i} \circ p} &
\pi_0\mathrm{Diff}(X)\arrow{r}{e}\arrow{d}{s} &
\pi_0\mathrm{Diff}(Y)\arrow{r}{r}\arrow{d}{\phi} &
\pi_0\mathrm{Emb}(I\times D^3,Y;\mathcal{U}\times D^3)
\\
  \Lambda=\mathbb{Q}\langle t_1,t_3,u_1,u_3\rangle/\mathcal{H} \arrow{d}{q} &
\pi_0\mathrm{Emb}(D^3,X)\arrow{l}{d}\arrow{r}{l} &
\pi_0\mathrm{Emb}(D^3,Y) &
\\
 \Lambda/(W_3^{\Delta_i} \circ p)\bigl(\pi_1\mathrm{Emb}(I\times D^3,Y;\mathcal{U}\times D^3)\bigr)
& & &
\ker(r)\arrow[hook]{uul}\arrow{ul}{\phi|_{\ker(r)}}\arrow{lll}{W_3'^{\Delta_i}}
\end{tikzcd}
}

There is an induced invariant 
$$W_3'^{\Delta_i}\colon \mathrm{ker}(r)\to \Lambda/(W_3^{\Delta_i}\ \circ p)\bigl(\pi_1\mathrm{Emb}(I\times D^3,Y;\mathcal{U}\times D^3)\bigr)$$
defined as follows. For a class in $\mathrm{ker}(r)\subset\pi_0\mathrm{Diff}(Y,\partial)$ lift it along $e$ to $\pi_0\mathrm{Diff}(X,\partial)$, and apply $q\circ W_3^{\Delta_i}$.
This is well-defined: any two choices of lifts to $\pi_0\mathrm{Diff}(X)$ differ by an element in $p(\pi_1 \mathrm{Emb}(I\times D^3,Y;\mathcal{U}\times D^3))$, hence they differ by something whose $W_3^{\Delta_i}$ value lies in the subgroup we are quotienting by. This shows independence of the choice of lift. In particular, for $[\Phi]\in \mathrm{ker}(r)$, if the composition $W_3'^{\Delta_i}([\Phi]) \neq 0$, then $[\Phi]$ is non-trivial in $\pi_0\mathrm{Diff}(Y,\partial)$.

In Sections \ref{sec:dax} and \ref{sec:computations}, we prove that
\[W_3'^{\Delta_i}(\Phi_{\mathcal{B}(t\nu_B\nu_R tu^kt^{-1})})\neq 0\]
for $k\in\mathbb{Z}^{+}$ and $i=1,2$ which leads to Theorem \ref{thm:A}. In Section \ref{thbproof}, we prove that $\Phi_{\mathcal{B}(t\nu_B\nu_R tu^kt^{-1})}(\Delta_i)$ produces isotopically non-trivial spanning disks.



\section{The Dax invariant and barbell generators}\label{sec:dax}

In this section, we analyse the image $p(\pi_1 \mathrm{Emb}(I\times D^3,Y))$. Since the map $\mathrm{Emb}(I\times D^3,Y)\to \mathrm{Emb}(I,Y)$ given by restricting to $I\times \{0\}$ is a fibration with fibre homotopy
equivalent to the free loop space of $SO(3)$, it suffices to understand $\pi_1\mathrm{Emb}(I,Y)$. We use Theorem 1.1 in \cite{danica} that provides an isomorphism 
\[\mathbb{Z}[\pi_1 Y\setminus 1]/\mathrm{dax}_\mathcal{U}(\pi_3 Y) \to \pi_1( \mathrm{Emb}(I,Y;\mathcal{U})).\]
Here $\mathbb{Z}[\pi_1 Y\setminus 1]$ is viewed as simply an abelian group and we ignore the ring
structure. In fact, for any smooth oriented 4-manifold $Y$, there is a central group extension
$$
1\to \mathbb{Z}[\pi_1 Y\setminus 1]/\mathrm{dax}_\mathcal{U}(\pi_3 Y)\to \pi_1( \mathrm{Emb}(I,Y;\mathcal{U}))\to \pi_2 Y\to 1
$$
where $\mathrm{dax}_\mathcal{U}$ denotes the \textbf{dax invariant}:
$$
\mathrm{dax}_\mathcal{U}\colon \pi_3 Y \to \mathbb{Z}[\pi_1 Y\setminus 1]
$$ which we briefly recall below. Here central means the image of $\mathbb{Z}[\pi_1 Y\setminus 1]/\mathrm{dax}_\mathcal{U}(\pi_3 Y)$ lies in the centre of
$\pi_1( \mathrm{Emb}(I,Y;\mathcal{U}))$.

For $a\in \pi_3 Y$, $\mathrm{dax}_\mathcal{U}(a)$ is defined by picking a 2-parameter family of immersions of $I$ into $Y$ representing $a$, which can be
viewed as a self-homotopy of the constant loop at $\mathcal{U}$, and analyzing the double points.
In other words, choose a map $F\colon S^2\to \mathrm{Imm}(I,Y;\mathcal{U})$ such that the composition with the concatenation map with
$\mathcal{U}^{-1}$, the inverse arc of $\mathcal{U}$, gives $a$.
The space $\mathrm{Imm}(I,Y;\mathcal{U})$ denotes the space of immersed intervals with the same endpoints as $\mathcal{U}$.
There is a natural inclusion map $\mathrm{Emb}(I,Y;\mathcal{U})\hookrightarrow \mathrm{Imm}(I,Y;\mathcal{U})$.
The map $\overline{F}\colon I^2\times I\to I^2\times Y$ defined by $(t,\theta)\to (t,F(t)(\theta))$ can be perturbed to an immersion with
only (finitely many) isolated transverse double points.
For each double point $(t_i,x_i)$ with $x_i=F(t_i)(\theta_{-})=F(t_i)(\theta_{+})$ where $\theta_{-}<\theta_{+}$, define a double point loop
$g_{x_i}$ as the concatenation $F(t_i)|_{[-1,\theta_{-}]}\cdot F(t_i)^{-1}|_{[-1,\theta_{+}]}$.
Then
$$
\mathrm{dax}_\mathcal{U}(a)\coloneqq\sum \epsilon_{x_i} g_{x_i} \in \mathbb{Z}[\pi_1 Y]
$$
where $\epsilon_{x_i}$ is the local orientation of $\overline{F}$ obtained by comparing the orientations of tangent bundles to the image of the
derivatives of $\overline{F}$ with the tangent space of $I^2\times Y$. Informally, it records the signed fundamental-group elements associated to double points occurring in a generic 2-parameter family of immersed arcs. See \cite{danica} for details.

Let $\pi_1(Y,\partial Y) $ be the set of homotopy classes of maps $k\colon I^1\to Y$ with endpoints $k(-1)=x_{-}$ and $k(1)\in \partial Y$.
It admits an action by $\pi_1 (Y,x_{-})$ via pre-concatenation. The following theorem from \cite{danica} relates
$\mathrm{dax}_{\mathcal{U}}$ and the \textbf{equivariant intersection pairing}
$$
\lambda\colon \pi_3 Y\times \pi_1 (Y,\partial Y)\to \mathbb{Z}[\pi_1 Y\setminus 1].
$$

\begin{theorem}(See Theorem A in \cite{danica})
\label{formi}
Let $\mathcal{U}_{-}\colon I\to Y$ be an embedding of an arc with endpoints $\mathcal{U}_{-}(-1)=x_{-}$ and
$\mathcal{U}_{-}(1)=x_{-}'$ where $x_{-}'$ is a point close to $x_{-}$ such that $\mathcal{U}_{-}$ is isotopic relative to the endpoints
into $\partial Y$. For a properly embedded arc $\mathcal{U}$ in $Y$ and for $a\in \pi_3 Y$ and $g\in \pi_1 Y$, the following is true:
    \begin{itemize}
        \item $\mathrm{dax}_\mathcal{U}(a)=\mathrm{dax}_{{\mathcal{U}}_{-}}(a)+\lambda(a,\mathcal{U}),$
        \item $\mathrm{dax}_{{\mathcal{U}}_{-}}(ga)=g\mathrm{dax}_{{\mathcal{U}}_{-}}(a)\Bar{g}-\lambda(ga,g)+\lambda(g,ga)$.
    \end{itemize}
\end{theorem}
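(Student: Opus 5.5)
The plan is to prove both formulas by writing down explicit $2$-parameter families of immersed arcs and counting their double points directly from the definition of $\mathrm{dax}$ recalled above. The idea throughout is to decompose the arc into a fixed part and a part that sweeps out the $3$-sphere class $a$. Double points are then sorted into three types: those inside the sweeping part, those between the sweeping part and the fixed part, and those inside the fixed part. Type three is empty because the fixed part is embedded. Type one is what $\mathrm{dax}$ of the "local" family records. Type two is an intersection count between a $3$-parameter sweep and an arc in the $4$-manifold $Y$, which is exactly what the pairing $\lambda$ measures.

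\emph{First formula.} I would represent $a\in\pi_3Y$ by a family $F_-\colon S^2\to \mathrm{Imm}(I,Y;\mathcal{U}_-)$ supported in a small neighbourhood of $\mathcal{U}_-$, i.e.\ near $x_-$ and near $\partial Y$. Isotoping $\mathcal{U}_-$ into the boundary and back out along $\mathcal{U}$, I then form the family $F=F_-\cdot\mathcal{U}'$. Here $\mathcal{U}'$ is the fixed arc running from $x_-'$ to $\mathcal{U}(1)$, so that $F$ is a family in $\mathrm{Imm}(I,Y;\mathcal{U})$, and after concatenation with $\mathcal{U}^{-1}$ it still represents $a$. Double points of $\overline F$ internal to the $F_-$ portion give $\mathrm{dax}_{\mathcal{U}_-}(a)$, with the same double-point loops, since the basing runs through the same initial segment. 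The new double points occur where the image of the sweep $I^2\times I\to Y$, which represents $a$ as a map of a $3$-sphere, meets the fixed arc $\mathcal{U}'$. Each such point has its $\theta_-$ in the sweep and its $\theta_+$ on $\mathcal{U}'$. The associated double-point loop is exactly the group element of the corresponding intersection between $a$ and the arc $\mathcal{U}$, and the sign is the local intersection sign. Summing these gives $\lambda(a,\mathcal{U})$. The step needing care is that the orientation conventions for $\epsilon_{x_i}$ agree with those of $\lambda$; I would check this on a single local model of a transverse intersection.

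\emph{Second formula.} To represent $ga$, I would take the family $F_-$ for $a$ and conjugate it by an immersed loop $\gamma$ representing $g$. Concretely, each arc first travels along $\gamma$, then does the $F_-$ motion translated to the far end of $\gamma$, and then returns along $\gamma^{-1}$. Double points internal to the translated $F_-$ block reproduce $\mathrm{dax}_{\mathcal{U}_-}(a)$, but each double-point loop is now read through the basing along $\gamma$, which turns each loop $h$ into $gh\bar g$; this gives $g\,\mathrm{dax}_{\mathcal{U}_-}(a)\bar g$. The remaining double points are between the sweep and the two copies of $\gamma$: the outgoing copy $\gamma$ precedes the sweep in the parameter $\theta$, and the returning copy $\gamma^{-1}$ follows it. Intersections of the sweep with $\gamma^{-1}$ have $\theta_-$ in the sweep; reading off the loops gives $+\lambda(g,ga)$ once the orientation reversal of $\gamma^{-1}$ is taken into account. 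Intersections of the sweep with the outgoing $\gamma$ have the roles of $\theta_\pm$ swapped. This reverses the double-point loop, giving the bar involution, and flips the sign, so these points contribute $-\lambda(ga,g)$. Any self-double points of $\gamma$ with its return copy come in cancelling pairs, or are constant in the family and thus contribute nothing to the $2$-parameter count.

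\emph{Main obstacle.} I expect the hardest step to be this second bookkeeping: pinning down which of $\lambda(ga,g)$ and $\lambda(g,ga)$ carries the minus sign and the bar, given the $\theta_-<\theta_+$ convention in the definition of $g_{x_i}$. I would first calibrate it on the simplest case, where $a$ is a local $S^3$ linking a single point of $\gamma$. There the answer can be computed by hand and compared with the formula to fix the signs. Finally, I would verify that the result is independent of the choice of $\gamma$ and of $F_-$ modulo the relations already built into the statement.
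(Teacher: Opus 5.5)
The paper gives no argument for this statement; it quotes Theorem A of \cite{danica}, so your sketch has to stand on its own. Your framework (sweep-internal, sweep-versus-fixed and fixed-internal double points) is the right one. The first formula works essentially as you describe, with two repairs. First, a family representing a nontrivial $a$ cannot be supported in a small neighbourhood of $\mathcal{U}_-$ in $Y$, since then $a=0$. What you actually use is that only the initial subarc moves, i.e.\ $F=F_-\cdot\mathcal{U}'$. Second, for a sweep/$\mathcal{U}'$ double point, the return path in $g_x$ runs back through the whole arc $F_-(t)$, not through $\mathcal{U}_-$. This is harmless because $F_-(t)\simeq\mathcal{U}_-$ rel endpoints. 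The orientation check also needs no model. At a double point the two sheets of $\overline F$ are $3$-planes in $T(I^2\times Y)\cong\mathbb{R}^6$. For the ordered pair (sweep sheet, arc sheet), eliminating the $I^2$-components shows the sign is $\epsilon_y(A,\mathcal{U})$, with $A$ oriented by $(t_1,t_2,\theta)$.

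The genuine gap is the second formula, which you fit to the target rather than derive. First, $\lambda(g,ga)$ is not defined by the paper's $\lambda\colon\pi_3Y\times\pi_1(Y,\partial Y)\to\mathbb{Z}[\pi_1Y\setminus 1]$. It must be read as the pairing with the arguments reversed, $\lambda(g,ga)=(-1)^{1\cdot 3}\,\overline{\lambda(ga,g)}=-\overline{\lambda(ga,g)}$; here $g$ occupies the arc slot, which makes sense because $x_-\in\partial Y$. Second, your identifications are inconsistent. You say the outgoing intersections carry the bar, yet you call their contribution $-\lambda(ga,g)$, which by definition is the unbarred sum.

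Working directly from the definition of $g_x$ settles it. On the returning strand, $\theta_-$ lies in the sweep. The loop runs out along $\gamma$ and the sweep to $y$, then back along $\gamma'$, through $F(t)$, and along $\gamma^{-1}$. Using $F(t)\simeq\mathcal{U}_-$ and $\gamma'\simeq\gamma$, this is $\lambda_y(ga)\lambda_y(g)^{-1}$, with no bar. The backwards traversal gives sign $-\epsilon_y$, so these points contribute $-\lambda(ga,g)$. On the outgoing strand the loop is the inverse. Swapping two $3$-dimensional sheets costs $(-1)^{3\cdot 3}=-1$, so these points contribute $-\overline{\lambda(ga,g)}=\lambda(g,ga)$.

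The calibration you propose cannot fix these signs. A small $3$-sphere about a point of $\gamma$ bounds a ball and meets $\gamma$ in two points with equal group elements and opposite signs, so every term vanishes. Finally, take the returning strand to be a disjoint parallel copy $\gamma'$, which is generic in dimension $4$. Then the base arc is an embedded finger isotopic to $\mathcal{U}_-$. A double point between $\gamma$ and $\gamma'$ would not ``contribute nothing'': it would persist over the whole parameter disc, violating the genericity needed to define $\mathrm{dax}$.
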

Here $\mathrm{dax}_{{\mathcal{U}}_{-}}$ is defined in the same way as $\mathrm{dax}_\mathcal{U}$ with a different basepoint.

We now briefly recall the definition of the equivariant intersection pairing $\lambda$ and refer the reader to \cite{danica}.
For $a\in \pi_3 Y$ represented by $A\colon S^3\to Y$ and $k\in \pi_1(Y,\partial Y)$ represented by $k\colon (I,\partial I)\to (Y,\partial Y)$,
we can assume that they intersect transversely in the interior of $Y$.
For each intersection point $y$, there is a double point loop $\lambda_y(A,k)=\lambda_y(A)\cdot \lambda_y(k)^{-1}$ where
$\lambda_y(A)$ and ${\lambda_y(k)}^{-1}$ are paths from $k(-1)=x_{-}$ to $y$ along $A$ and $k$ respectively.
Define
$$
\lambda(a,k)=\sum _{y\in (A\cap k)\setminus \{x_{-}\}} \epsilon_{y}(A,k)[\lambda_y(A,k)]/[1],
$$
where the sign $\epsilon_y(A,k)$ is given by the local orientation at $y$.
One verifies that $\lambda$ is linear in the first coordinate and $\lambda(a,gk)=\lambda(a,g)+\lambda(a,k)\Bar{g}$ for $g\in \pi_1 Y$,
$k\in \pi_1 (Y,\partial Y)$ and $a\in \pi_3 Y$.
Note that the quotient by $[1]$ means that we forget the term at $1\in \pi_1 Y$.

\begin{figure}[t]
    \centering
    \includegraphics[width=0.52\textwidth]{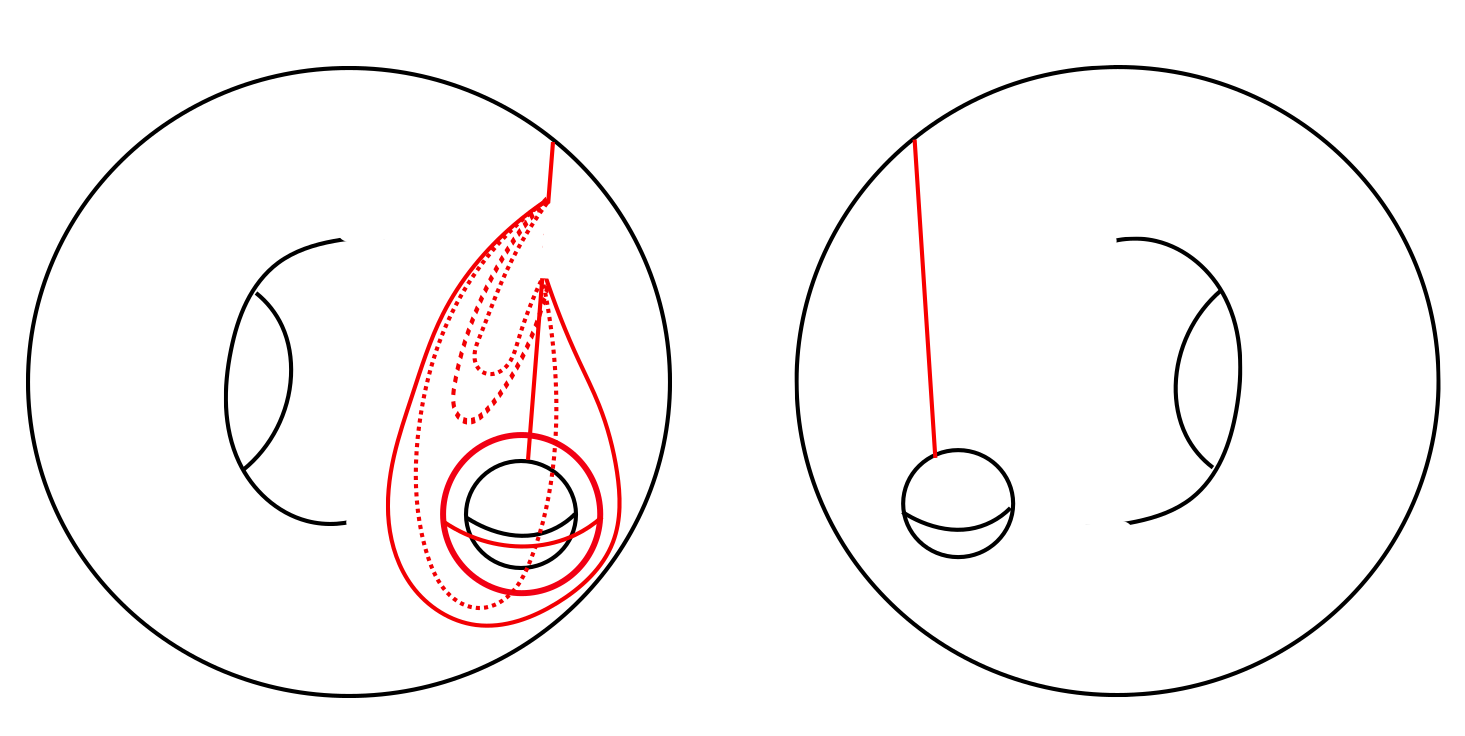}
    \caption{Representation of the connected-sum sphere $S$ by a loop of embedded intervals.}
    \label{dax1}
\end{figure}

However, for us, the situation is simplified for $Y=S^1\times D^3\#S^1\times D^3$.
The group $\pi_3 Y\cong \mathbb{Z}[t^{\pm},u^{\pm}]$ is generated by the action of
$\pi_1 Y\cong\mathbb{Z}*\mathbb{Z}=\langle u \rangle * \langle t \rangle$ on the connected sum sphere $S$.
Furthermore, $\pi_2 Y=0$, thus there is an isomorphism
$$
\mathbb{Z}[\pi_1 Y\setminus 1]/\mathrm{dax}_\mathcal{U}(\pi_3 Y)\to \pi_1( \mathrm{Emb}(I,Y;\mathcal{U})).
$$
Figure~\ref{dax1} shows the 3-dimensional analog representation of the connected-sum 2-sphere by a loop of arcs.

Using Theorem~\ref{formi}, we can calculate the image of $\lambda$ which is given by
$2 \mathbb{Z}[t^{\pm 1 },u^{\pm 1}\setminus 1]$ as follows.
For a representative of $1\in \pi_3 Y$, say the connected-sum sphere, there are no self-intersections
(see Figure~\ref{boundarysumrelation}), contributing to a trivial loop and hence the equivariant intersection pairing is 0. For a representative $a$ of a polynomial, for example $t^i u^jt^k\in\pi_3 Y$,  $\lambda(a,\mathcal{U})$ gives back $t^i u^jt^k$.
Further, if we choose $\mathcal{U}_{-}$ to be an arc with both endpoints in the same boundary component of
$Y$, then $\mathrm{dax}_{\mathcal{U}_{-}}(t^i u^jt^k)=t^i u^jt^k$, since we can choose a
representative of it with a single double point that produces the same polynomial.
It follows that $\pi_1 \mathrm{Emb}(I,Y)$ is isomorphic to
$$
\mathbb{Z}[t^{\pm 1 },u^{\pm 1}\setminus 1]/2 \mathbb{Z}[t^{\pm 1 },u^{\pm 1}\setminus 1]
\cong (\mathbb{Z}/2) [t^{\pm 1 },u^{\pm 1}\setminus 1].
$$
An element here is described by picking a small sub-arc of the basepoint (see Figure~\ref{boundarysumrelation}) for the basepoint arc
$\mathcal{U}$, pushing it along a path which corresponds to this element, and spinning around the basepoint arc, and finally coming back.
This process is defined as \textbf{spinning} in Definition 4.1 of \cite{Budney-gabai}. It is also described in \cite{danica}.
\begin{figure}
  \centering
  \begin{minipage}{0.43\textwidth}
    \includegraphics[width=\textwidth]{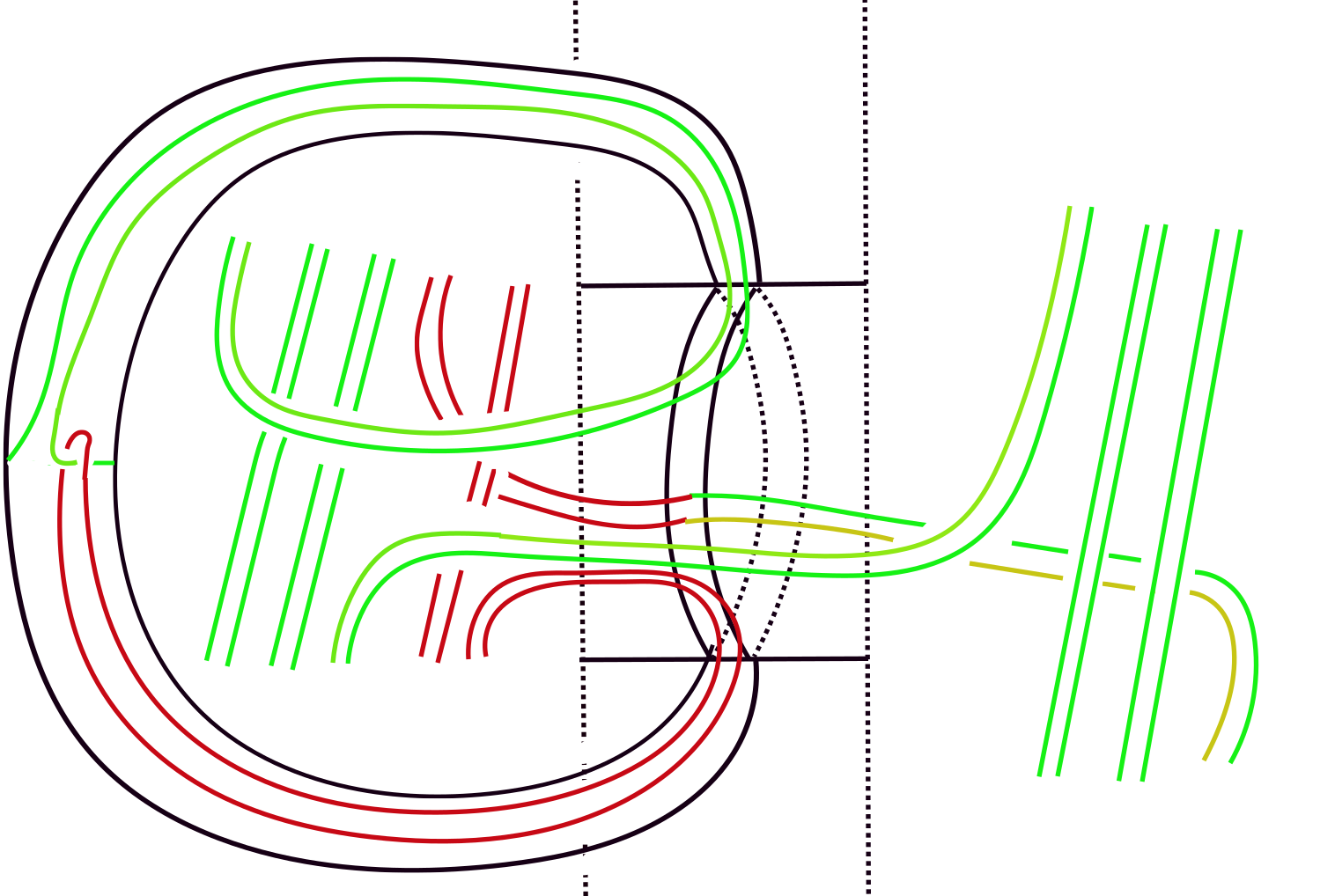}
    \caption{The element $t^3 u^3 t^2$ in $\pi_1\mathrm{Emb}(I,X)$.}
    \label{induce}
  \end{minipage}
  \hfill
  \begin{minipage}{0.43\textwidth}
    \includegraphics[width=\textwidth]{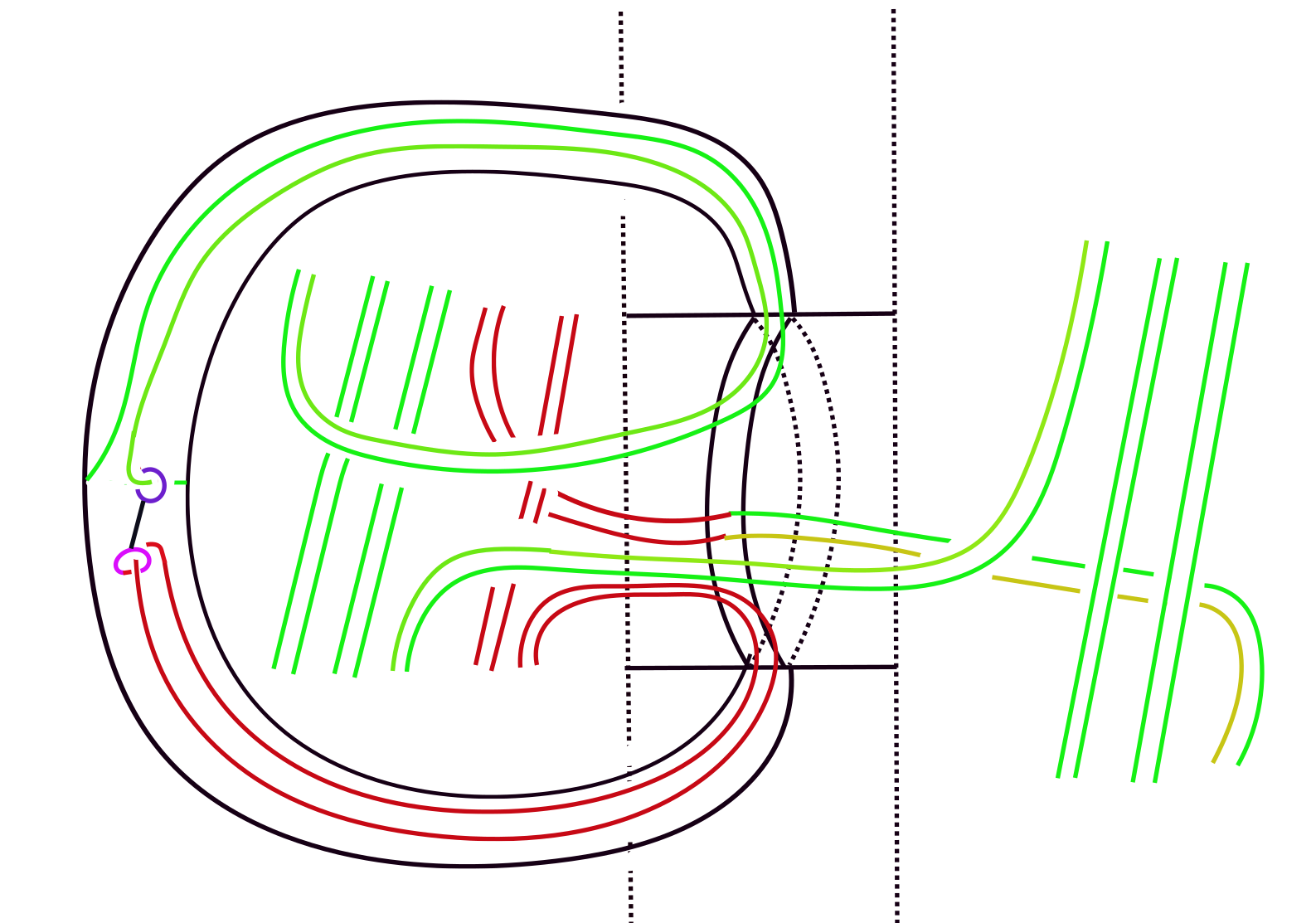}
    \caption{The embedded barbell induced by $t^3 u^3 t^2$ in $\pi_1\mathrm{Emb}(I,X)$.}
    \label{induce2}
  \end{minipage}
\end{figure}

\begin{figure}
  \centering
  \begin{minipage}{0.43\textwidth}
    \includegraphics[width=\textwidth]{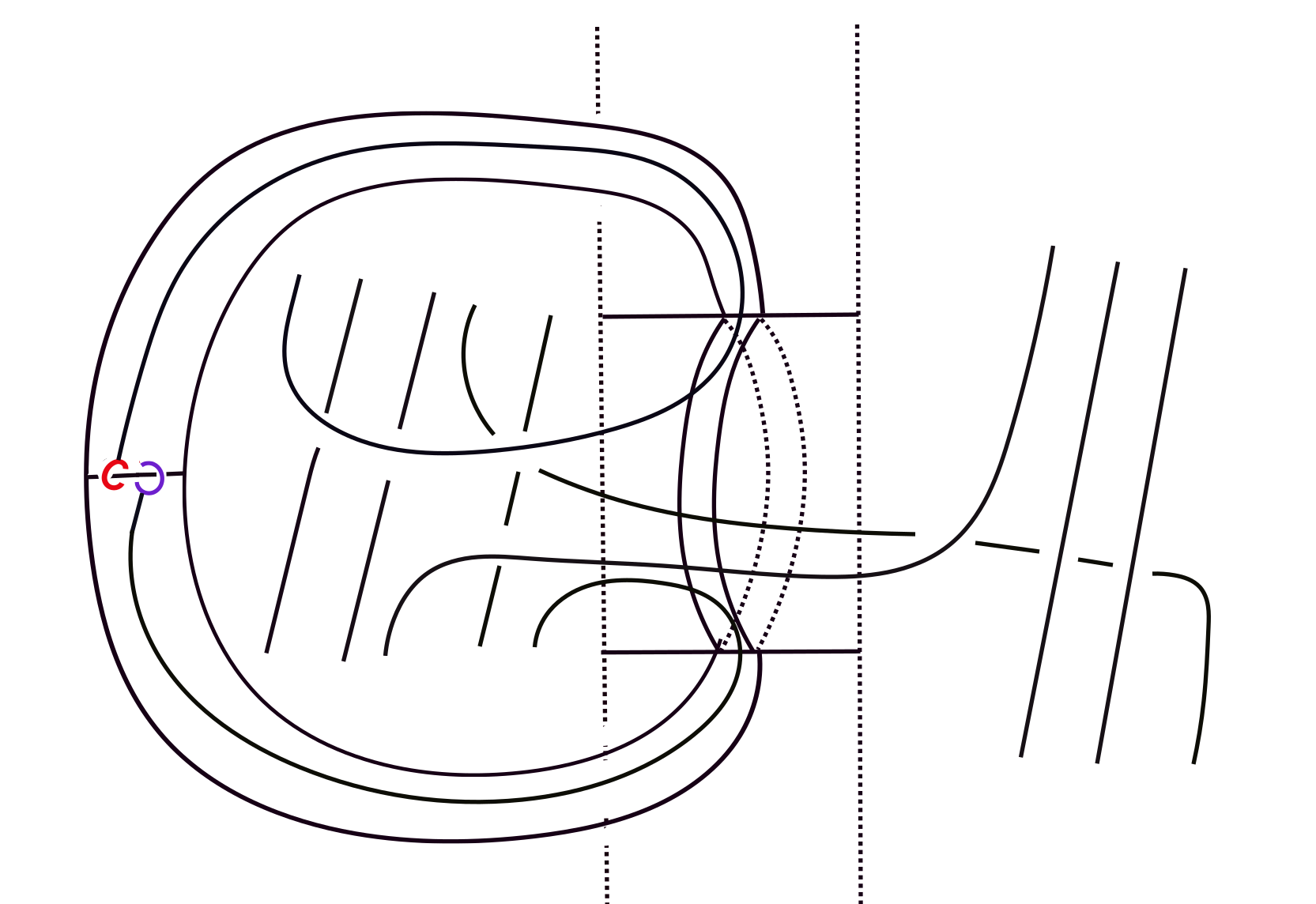}
    \caption{The induced barbell by $t^3 u^3 t^2$ dragged to a standard position.}
    \label{induce3}
  \end{minipage}
  \hfill
  \begin{minipage}{0.43\textwidth}
    \includegraphics[width=\textwidth]{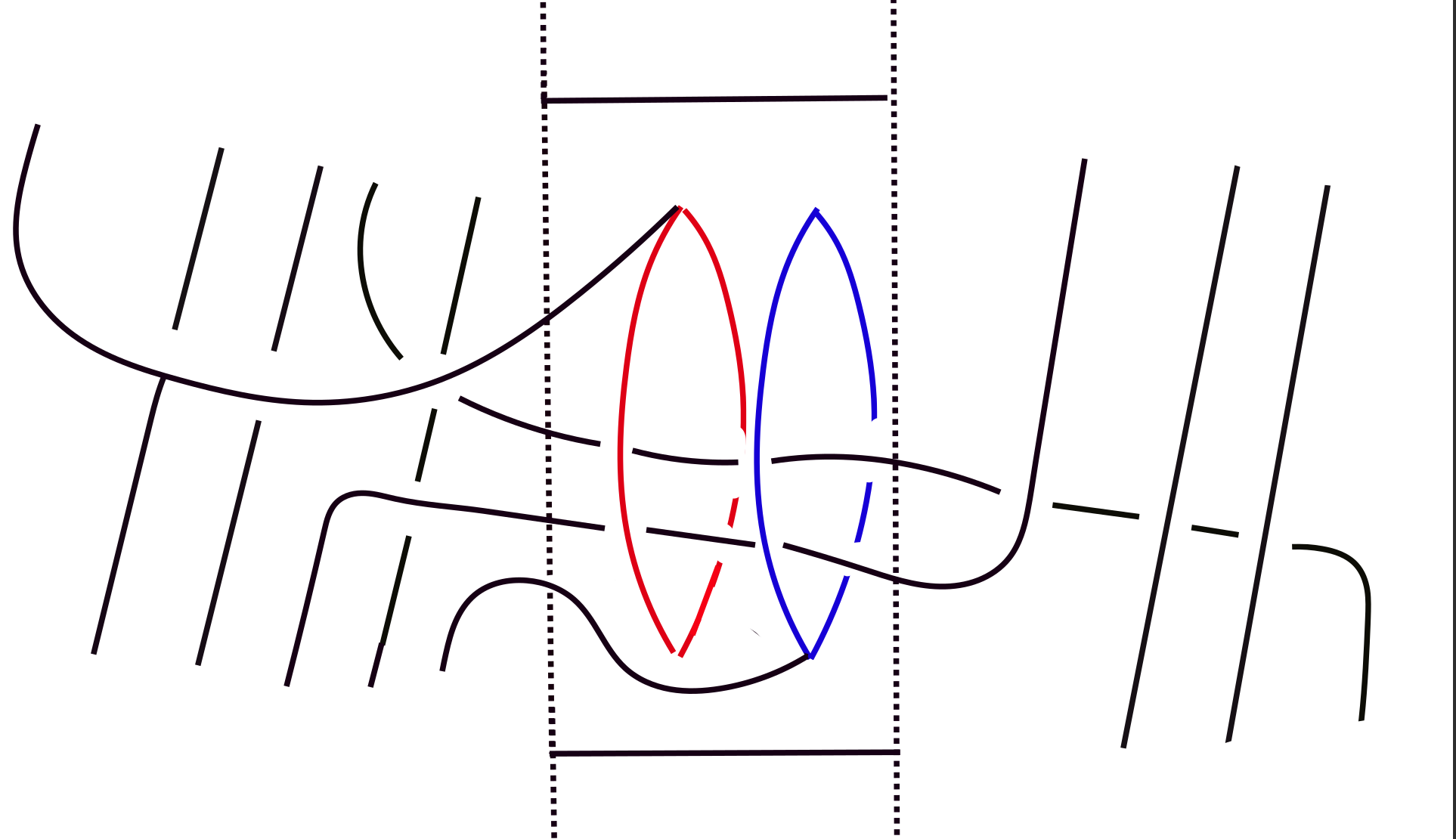}
    \caption{The induced barbell $\mathcal{B}(t^{-2}\nu_R^{-1}\nu_B^{-1}u^{-3}\nu_B\nu_R t^{-3} \nu_B)$ in standard position in $X\subset Y$.}
    \label{induce4}
  \end{minipage}
\end{figure}
The image $p(\pi_1 \mathrm{Emb}(I,Y))$ can be described by \textbf{barbell diffeomorphisms}
(cf.\ \cite{Budney-gabai}, \cite{Budney-gabai2}, \cite{niu2025mappingclassgroup4dimensional}) specified by words in the free group of two
generators.
Figure~\ref{induce} is an example of an element in $\pi_1\mathrm{Emb}(I,X)$ that corresponds to $t^3u^3t^2$, where a subarc of $\mathcal{U}$
spins around the loop $t^3u^3t^2$ and links itself. This creates a double-point whose resolution gives rise to a 1-parameter family of embedded
arcs. Using Section 4 of \cite{Budney-gabai}, it follows that the isotopy extension leads to a diffeomorphism supported in a neighbourhood diffeomorphic to a model barbell manifold.
Figure~\ref{induce2} depicts this initial barbell manifold induced from the double point resolution.
Figure~\ref{induce3} is the result of an isotopy that puts both cuff spheres in a standard position and Figure~\ref{induce4} is obtained by
drilling out a neighbourhood of $\mathcal{U}$ to get back to $X$ and corresponds to
$$
t^{-2}\nu_R^{-1}\nu_B^{-1}u^{-3}\nu_B\nu_R t^{-3} \nu_B.
$$

More generally, we observe that the image $p(\pi_1 \mathrm{Emb}(I,Y))$ is generated by diffeomorphisms induced from such barbells:
$$
t^{x_1}\nu_R\nu_Bu^{y_1}\nu_B^{-1}\nu_R^{-1}t^{x_2}\nu_R\nu_Bu^{y_2}\nu_B^{-1}\nu_R^{-1}\dots \nu_R\nu_Bu^{y_n}\nu_B^{-1}\nu_R^{-1}t^{x_n}
$$
for $x_i$, $y_i\in \mathbb{Z}\setminus\{0\}$ for $i=1,2,\dots,n$ where $n$ is a positive integer.
The $W_3^{\Delta_i}$ invariant of these barbells can be calculated following the method in  \cite{niu2025mappingclassgroup4dimensional} (cf. Lemma 5.9 of \cite{niu2025mappingclassgroup4dimensional}).
Namely, they can be written as linear combinations of $W_3$ of sub-barbells isotopic to the form of $a\nu_R\nu_B c$ or $a\nu_B\nu_Rc$ such that $(a,c)$ is admissible:

\begin{definition}[Admissible pairs]\label{def:adm}
A pair $(a,c)$ of nontrivial reduced words in $\langle t,u\rangle$ is \emph{admissible} if either
\begin{itemize}[leftmargin=2em]
\item $a$ ends with a nonzero power of $t$ and $c$ begins with a nonzero power of $u$, or
\item $a$ ends with a nonzero power of $u$ and $c$ begins with a nonzero power of $t$.
\end{itemize}
\end{definition}

\begin{proposition}\label{prop:admissible-span}
The set of values ${W_3^{\Delta_i}}$ on barbells in $p(\pi_1 \mathrm{Emb}(I,Y))$ is contained in the span of the ${W_3^{\Delta_i}}$ values on barbells in the form
of $a\nu_R\nu_B c$ or $a\nu_B\nu_Rc$ such that $(a,c)$ is admissible.
\end{proposition}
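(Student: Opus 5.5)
The plan is to reduce to the generators of the image $p(\pi_1\mathrm{Emb}(I,Y))$ and then decompose each generating barbell into sub-barbells. First I will use additivity of $W_3^{\Delta_i}$ on $\pi_0\mathrm{Diff}(X,\partial)$, which holds because $W_3^{\Delta_i}=d\circ s$ is a homomorphism (cf.\ \cite{niu2025mappingclassgroup4dimensional}). By the discussion above, the image $p(\pi_1\mathrm{Emb}(I,Y))$ is generated by the barbell diffeomorphisms
$$
\Phi_{\mathcal{B}(w)},\qquad w=t^{x_1}\nu_R\nu_Bu^{y_1}\nu_B^{-1}\nu_R^{-1}t^{x_2}\cdots\nu_R\nu_Bu^{y_n}\nu_B^{-1}\nu_R^{-1}t^{x_n},
$$
with all $x_j,y_j\neq 0$. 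By additivity it therefore suffices to show that each $W_3^{\Delta_i}(\Phi_{\mathcal{B}(w)})$ lies in the span described in the statement. I will also have to check that the boundary cases are covered: the words beginning or ending with a $u$-power that arise from the spinning elements of $(\mathbb{Z}/2)[t^{\pm1},u^{\pm1}\setminus1]$, and the elements whose initial and final $t$-exponents vanish. These come from the analogous pictures of Figures~\ref{induce}--\ref{induce4}.

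For a single generator I will follow the method of Lemma 5.9 of \cite{niu2025mappingclassgroup4dimensional}. The cuff spheres of $\mathcal{B}(w)$ are linked with the belt spheres $\nu_R,\nu_B$ at several places along the word. Each occurrence of a pair $\nu_R\nu_B$ or $\nu_B^{-1}\nu_R^{-1}$ in $w$ marks a crossing of the bar with the two belt spheres. The idea is to slide the bar, that is, perform a finger-move or handle-slide decomposition, so that $\Phi_{\mathcal{B}(w)}$ factors as a product of barbell diffeomorphisms each containing exactly one such adjacent pair $\nu_R\nu_B$ or $\nu_B\nu_R$. This gives
$$
W_3^{\Delta_i}(\Phi_{\mathcal{B}(w)})=\sum \pm W_3^{\Delta_i}(\Phi_{\mathcal{B}(a\nu_R\nu_B c)}) \text{ or } W_3^{\Delta_i}(\Phi_{\mathcal{B}(a\nu_B\nu_R c)}),
$$
modulo terms that $W_3^{\Delta_i}$ kills. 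Here $a$ and $c$ are the subwords of $w$ read in $\langle t,u\rangle$ on either side of the chosen pair, after cancelling the other $\nu$-letters, which become trivial once the remaining linking is undone. Inverse pairs $\nu_B^{-1}\nu_R^{-1}$ would be rewritten as $\nu_R\nu_B$-type barbells using the reflection or inversion relation for barbells (reversing the bar orientation) established in \cite{niu2025mappingclassgroup4dimensional}. Sub-barbells with trivial $a$ or trivial $c$ are isotopic to barbells living in a single $S^1\times D^3$ summand, which are trivial by Section 2 of \cite{niu2025mappingclassgroup4dimensional}, so they contribute zero.

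Next I will check admissibility. In $w$, every pair $\nu_R\nu_B$ is immediately preceded by $t^{x_j}$ and followed by $u^{y_j}$. Every pair $\nu_B^{-1}\nu_R^{-1}$ is preceded by $u^{y_j}$ and followed by $t^{x_{j+1}}$. All exponents are nonzero and the powers of $t$ and $u$ alternate, so the reduced words $a$ and $c$ end and begin with nonzero powers of different generators. Hence every pair $(a,c)$ is admissible in the sense of Definition~\ref{def:adm}. This step is routine once the decomposition is in place.

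The main obstacle is making the decomposition in the second paragraph rigorous. I need the sub-barbells to be isotopic to exactly the standard forms $a\nu_R\nu_B c$ or $a\nu_B\nu_R c$, with the words $a$ and $c$ inherited from $w$, and I need the possible correction terms (for example those coming from $\pi_3$, or from the Hexagon relations $\mathcal{H}$) not to introduce non-admissible contributions. I would first try to mimic the inductive argument on the number of $\nu$-pairs from Lemma 5.9 of \cite{niu2025mappingclassgroup4dimensional}. Each step would unlink one cuff crossing and record the resulting commutator barbell.
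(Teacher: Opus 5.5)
Your proposal follows the same route as the paper. The paper also starts from the generating barbells $t^{x_1}\nu_R\nu_Bu^{y_1}\nu_B^{-1}\nu_R^{-1}\cdots$ of $p(\pi_1\mathrm{Emb}(I,Y))$ and simply cites the method of Lemma 5.9 of \cite{niu2025mappingclassgroup4dimensional} to split $W_3^{\Delta_i}$ into contributions from sub-barbells $a\nu_R\nu_Bc$ or $a\nu_B\nu_Rc$; the admissibility of $(a,c)$, which the paper leaves implicit, is exactly your observation that nonzero $t$- and $u$-powers alternate around each $\nu$-pair. Your concern about hexagon correction terms is moot, since the span is taken in $\Lambda$, which already quotients by $\mathcal H$.
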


\section{Proof of Theorem A}\label{sec:computations}
We prove Theorem \ref{thm:A} by showing that the barbells $\Phi_{\mathcal{B}(t\nu_B\nu_Rtu^kt^{-1})}$, $k=1,2,\dots$ do not lie in the span of admissible barbells. We work in the free group
$$
F=\langle t_1,u_1,t_3,u_3\rangle
$$
and its rational group ring $\mathbb Q[F]$ (viewed additively). Let $\mathcal A$ denote the free $\mathbb Q$-vector space on
reduced words in $F$ (so $\mathcal A\cong \mathbb Q[F]$ additively). In particular, there are no commutation relations among
$t_1,u_1,t_3,u_3$, and equality of monomials always means equality of reduced words in $F$. 

We first observe that for the barbells $\Phi_{\mathcal{B}(t\nu_B\nu_Rtu^kt^{-1})}$, $k=1,2,\dots$, there is a single Type 4 intersection point between $\Delta_1$ and the bar, and two Type 4 intersection points plus a Type 6 intersection point between $\Delta_2$ and the bar (cf. Figures 14, 17 and 25 of \cite{niu2025mappingclassgroup4dimensional}).  Applying Lemma 5.10 and Lemma 5.11 of \cite{niu2025mappingclassgroup4dimensional} to the barbells $t\nu_B\nu_Rtu^kt^{-1}$ for
$k=1,2,\dots$, we have
\begin{align}
W_3^{\Delta_1}\bigl(\Phi_{\mathcal{B}(t\nu_B\nu_Rtu^kt^{-1})}\bigr)
&=\;T_4\!\left(t^{-1},\,tu^{-k}t^{-1}\right),
\label{eq:target1}\\
W_3^{\Delta_2}\bigl(\Phi_{\mathcal{B}(t\nu_B\nu_Rtu^kt^{-1})}\bigr)
&=\;2\,T_4\!\left(t^{-1},\,tu^{-k}t^{-1}\right)
\;+\;
T_6\!\left(t^{-1},\,tu^{-k}t^{-1}\right).
\label{eq:target2}
\end{align}
where
\begin{equation}\label{eq:T4expandedk}
\begin{aligned}
T_4\!\left(t^{-1},\,tu^{-k}t^{-1}\right)
=
\Bigl(&
t_1^{-1}t_3u_3^{-k}t_3^{-2}
+t_1t_3u_3^{-k}
-t_1u_1^{-k}t_1^{-1}t_3^{-1}
-t_1u_1^{-k}t_1^{-1}t_3
\\
&+
t_1^{-1}t_3u_3^{-k}t_3^{-1}
+t_1t_3u_3^{-k}t_3^{-1}
-t_1u_1^{-k}t_1^{-2}t_3^{-1}
-t_1u_1^{-k}t_3
\Bigr).
\end{aligned}
\end{equation}
and
\begin{equation}\label{eq:T6expandedk}
\begin{aligned}
T_6\!\left(t^{-1},\,tu^{-k}t^{-1}\right)=\;&
t_1u_1^{-k}t_1^{-1}\,u_3^{-k}t_3^{-1}
+t_1u_1^{k}t_1^{-1}\,u_3^{k}t_3^{-1}
-t_1t_3u_3^{k}
-t_1t_3u_3^{-k}
\\
&+
t_1^{-1}t_3u_3^{k}t_3^{-2}
+t_1^{-1}t_3u_3^{-k}t_3^{-2}
+t_1^{-1}t_3u_3^{-k}t_3^{-1}
+t_1^{-1}t_3u_3^{k}t_3^{-1}
\\
&-
t_1u_1^{-k}t_1^{-1}t_3^{-1}
-t_1u_1^{k}t_1^{-1}t_3^{-1}
-t_1u_1^{-k}t_1^{-2}t_3^{-1}
-t_1u_1^{k}t_1^{-2}t_3^{-1}
\\
&+
t_1u_1^{-k}t_3
+t_1u_1^{k}t_3
-u_1^{-k}t_1^{-1}t_3u_3^{-k}t_3^{-1}
-u_1^{k}t_1^{-1}t_3u_3^{k}t_3^{-1}.
\end{aligned}
\end{equation}

Let $\langle t,u\rangle$ be the free group on $t,u$. For any word $w\in\langle t,u\rangle$ define
$$
w_1:=w(t_1,u_1),\qquad w_3:=w(t_3,u_3),
$$
and write $\overline{w}=w^{-1}$ following the convention in \cite{niu2025mappingclassgroup4dimensional}. Recall from Theorem 3.16 of \cite{niu2025mappingclassgroup4dimensional} that for $\nu,\mu\in\langle t,u\rangle$ we have the
\textbf{hexagon relations}
\begin{equation}\label{eq:HexagonElement}
H(\nu,\mu)
:=\nu_1\mu_3\;+\;\mu_1^{-1}\nu_3^{-1}\;-\;\nu_1^{-1}\mu_3\nu_3^{-1}\;-\;\nu_1\mu_1^{-1}\nu_3
\in \mathcal A.
\end{equation}


Following the approach as in \cite{niu2025mappingclassgroup4dimensional}, we observe that the barbells described in Proposition \ref{prop:admissible-span} only produce Type 1, Type 3, Type 4 and Type 6 points under scanning, leading to polynomials
\begin{align*}
T_1(\overline{a},\overline{c})
&=
a_1\,\overline{c}_3\,a_3
+\overline{c}_1\,a_1\,a_3
-\overline{c}_1\,\overline{a}_3
-\overline{a}_1\,\overline{c}_3,
\\[1mm]
T_3(\overline{a},\overline{c})
&=
-\,c_1\,\overline{a}_3\,c_3
+a_1\,\overline{c}_3\,a_3
+\overline{a}_1\,c_3\,\overline{a}_3
-c_1\,a_3
-a_1\,c_3
\\[-1mm]
&\qquad
+c_1\,\overline{a}_1\,\overline{a}_3
+\overline{c}_1\,a_1\,a_3
-a_1\,\overline{c}_1\,\overline{c}_3,
\\[1mm]
T_4(\overline{a},\overline{c})
&=
\overline{a}_1\,\overline{c}_3\,\overline{a}_3
+a_1\,\overline{c}_3\,a_3
-\overline{c}_1\,\overline{a}_3
-\overline{c}_1\,a_3
+\overline{a}_1\,\overline{c}_3
+a_1\,\overline{c}_3
\\[-1mm]
&\qquad
-\overline{c}_1\,\overline{a}_1\,\overline{a}_3
-\overline{c}_1\,a_1\,a_3,
\\[1mm]
T_6(\overline{a},\overline{c})
&=
\overline{c}_1\,\overline{a}_3\,\overline{c}_3
+c_1\,\overline{a}_3\,c_3
-a_1\,c_3\,a_3
-a_1\,\overline{c}_3\,a_3
+\overline{a}_1\,c_3\,\overline{a}_3
+\overline{a}_1\,\overline{c}_3\,\overline{a}_3
\\[-1mm]
&\qquad
+\overline{a}_1\,\overline{c}_3
+\overline{a}_1\,c_3
-\overline{c}_1\,\overline{a}_3
-c_1\,\overline{a}_3
-\overline{c}_1\,\overline{a}_1\,\overline{a}_3
-c_1\,\overline{a}_1\,\overline{a}_3
\\[-1mm]
&\qquad
+\overline{c}_1\,a_1\,a_3
+c_1\,a_1\,a_3
-\overline{a}_1\,\overline{c}_1\,\overline{c}_3
-\overline{a}_1\,c_1\,c_3.
\end{align*}
in $\mathcal{A}$ as in Lemma 5.10 and Lemma 5.11 of \cite{niu2025mappingclassgroup4dimensional} (specialized to $b=1$ and written in terms of
$(\overline a,\overline c)$). In particular, the ${W_3^{\Delta_i}}$ of barbells in the image $p(\pi_1 \mathrm{Emb}(I,Y))$ is contained in $\mathcal S\le \mathcal A$, the $\mathbb Q$-subspace spanned by all $T_i(\overline a,\overline c)$ with $(a,c)$ admissible and
$i\in\{1,3,4,6\}$. Let $\mathcal H\le \mathcal A$ be the $\mathbb Q$-subspace spanned by all hexagon relations $H(\nu,\mu)$. Let $\pi:\mathcal A\to \mathcal A/\mathcal H$ be the quotient map, and write $\overline{\mathcal S}:=\pi(\mathcal S)$.

In the remainder of this section, we show that $W_3^{\Delta_i}\bigl(\Phi_{\mathcal{B}(t\nu_B\nu_Rtu^kt^{-1})}\bigr)$ is not in $\overline{\mathcal{S}}$ for $i=1,2$ and $k\in \mathbb{Z}^{+}$. Fix $k\ge 1$ and define
\begin{equation}\label{eq:m1m2k}
m_1(k):=t_1^{-1}t_3u_3^{-k}t_3^{-2},
\qquad
m_2(k):=t_1^{2}u_1^{k}t_1^{-1}t_3.
\end{equation}
Define a function $\Psi_k:\mathcal A\to\mathbb Q$ by
\begin{equation}\label{eq:PsiDef}
\Psi_k(x):=\mathrm{coeff}_{m_1(k)}(x)-\mathrm{coeff}_{m_2(k)}(x),
\end{equation}
where $\mathrm{coeff}_{w}(x)$ denotes the coefficient of the word $w\in F$ in $x\in\mathcal A$. Note that $\Psi_k$ is linear.

\begin{lemma}\label{lem:PsiTarget1}
$\Psi_k\!\left(T_4(t^{-1},tu^{-k}t^{-1})\right)=1$.
\end{lemma}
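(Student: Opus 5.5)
The proof is a direct coefficient computation in the free vector space $\mathcal A$. I would substitute into the general formula for $T_4(\overline a,\overline c)$ recorded above, freely reduce each resulting word in $F$, and read off the coefficients of $m_1(k)$ and $m_2(k)$. Here $\overline a=t^{-1}$ and $\overline c=tu^{-k}t^{-1}$, so $a=t$ and $c=tu^{k}t^{-1}$. Since $\mathcal A$ is free on reduced words of $F=\langle t_1,u_1,t_3,u_3\rangle$, the coefficient of a word is simply the signed count of how often that reduced word occurs among the summands.

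First I would re-derive the expansion \eqref{eq:T4expandedk} term by term, rather than merely citing it, using $\overline a_1=t_1^{-1}$, $a_1=t_1$, $\overline c_1=t_1u_1^{-k}t_1^{-1}$ and $\overline c_3=t_3u_3^{-k}t_3^{-1}$. The eight terms reduce as follows:
\begin{itemize}
\item $\overline a_1\overline c_3\overline a_3=t_1^{-1}t_3u_3^{-k}t_3^{-2}$;
\item $a_1\overline c_3a_3=t_1t_3u_3^{-k}$, where $t_3^{-1}t_3$ cancels;
\item $-\overline c_1\overline a_3=-t_1u_1^{-k}t_1^{-1}t_3^{-1}$;
\item $-\overline c_1a_3=-t_1u_1^{-k}t_1^{-1}t_3$;
\item $\overline a_1\overline c_3=t_1^{-1}t_3u_3^{-k}t_3^{-1}$;
\item $a_1\overline c_3=t_1t_3u_3^{-k}t_3^{-1}$;
\item $-\overline c_1\overline a_1\overline a_3=-t_1u_1^{-k}t_1^{-2}t_3^{-1}$;
\item $-\overline c_1a_1a_3=-t_1u_1^{-k}t_3$, where $t_1^{-1}t_1$ cancels.
\end{itemize}
This agrees with \eqref{eq:T4expandedk}. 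The eight reduced words are pairwise distinct, so no cancellation occurs among them.

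Next I would compare with the test words in \eqref{eq:m1m2k}. The word $m_1(k)=t_1^{-1}t_3u_3^{-k}t_3^{-2}$ is exactly the first summand, with coefficient $+1$. It differs from the only other word beginning with $t_1^{-1}$, namely $t_1^{-1}t_3u_3^{-k}t_3^{-1}$, because the final exponents differ. Hence $\mathrm{coeff}_{m_1(k)}=1$.

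The word $m_2(k)=t_1^{2}u_1^{k}t_1^{-1}t_3$ begins with the reduced syllable $t_1^{2}$. Every summand instead begins with $t_1^{-1}$, or with $t_1$ followed by $t_3$ or $u_1^{-k}$. So $m_2(k)$ does not occur, and $\mathrm{coeff}_{m_2(k)}=0$. By the definition \eqref{eq:PsiDef} and linearity, $\Psi_k=1-0=1$.

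The only step needing care is the free reduction at the junctions, specifically the cancellations in the second and eighth terms. One must also check that $k\ge1$ prevents any accidental coincidence of words. Both are routine, so I expect no real obstacle.
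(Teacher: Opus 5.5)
Your proposal is correct and follows the paper's argument: it reads off from the expansion \eqref{eq:T4expandedk} that $m_1(k)$ occurs exactly once with coefficient $+1$ and that $m_2(k)$ does not occur. Your re-derivation of that expansion from the general formula for $T_4(\overline a,\overline c)$ also checks out, including the cancellations in the second and eighth terms, and it is a verification step the paper leaves implicit.
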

\begin{proof}
From \eqref{eq:T4expandedk}, we observe that $m_1(k)$ appears exactly once with coefficient $+1$ in
$T_4\!\left(t^{-1},tu^{-k}t^{-1}\right)$, while $m_2(k)$ does not appear at all. Hence $\Psi_k=1$.
\end{proof}

\begin{lemma}\label{lem:PsiTarget2}
$\Psi_k\!\left(2T_4(t^{-1},tu^{-k}t^{-1})+T_6(t^{-1},tu^{-k}t^{-1})\right)=3$.
\end{lemma}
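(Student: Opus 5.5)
By linearity of $\Psi_k$ and Lemma~\ref{lem:PsiTarget1}, we have
$$
\Psi_k\!\left(2T_4(t^{-1},tu^{-k}t^{-1})+T_6(t^{-1},tu^{-k}t^{-1})\right)=2+\Psi_k\!\left(T_6(t^{-1},tu^{-k}t^{-1})\right).
$$
So the plan is to show that $\Psi_k$ takes the value $1$ on the expansion \eqref{eq:T6expandedk}. As in the proof of Lemma~\ref{lem:PsiTarget1}, this amounts to reading off the coefficients of the two reduced words $m_1(k)$ and $m_2(k)$ in the sixteen monomials of \eqref{eq:T6expandedk}. Throughout, equality of monomials means equality of reduced words in the free group $F$, and all monomials there are already reduced.

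First I will locate $m_1(k)=t_1^{-1}t_3u_3^{-k}t_3^{-2}$. It occurs exactly once in \eqref{eq:T6expandedk}, namely as the term $+t_1^{-1}t_3u_3^{-k}t_3^{-2}$ in the second row, so its coefficient is $+1$. The potential pitfall is the neighbouring monomial $t_1^{-1}t_3u_3^{k}t_3^{-2}$. It is a different reduced word because $k\ge 1$ forces $u_3^{k}\neq u_3^{-k}$ in $F$. The remaining terms with prefix $t_1^{-1}t_3$ end in $t_3^{-1}$ rather than $t_3^{-2}$, so they are also distinct from $m_1(k)$.

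Next I will check that $m_2(k)=t_1^{2}u_1^{k}t_1^{-1}t_3$ does not occur. Every monomial in \eqref{eq:T6expandedk} begins with one of $t_1$ followed by $u_1^{\pm k}$ or $t_3$, or with $t_1^{-1}$, or with $u_1^{\pm k}$. None begins with $t_1^{2}$, so the coefficient of $m_2(k)$ is $0$. Hence $\Psi_k(T_6(t^{-1},tu^{-k}t^{-1}))=1-0=1$, and the total is $2\cdot 1+1=3$.

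The only real obstacle is bookkeeping. One must make sure that no two monomials in the expansion coincide with, or cancel against, the same reduced word as $m_1(k)$. This is why I stress that the monomials are compared as reduced words in $F$ with no commutation relations, and that the hypothesis $k\ge 1$ separates the $u^{k}$ and $u^{-k}$ versions of each term.
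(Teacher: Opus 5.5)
Your proposal is correct and is essentially the paper's own argument. You use linearity and Lemma~\ref{lem:PsiTarget1} to reduce to $\Psi_k\bigl(T_6(t^{-1},tu^{-k}t^{-1})\bigr)=1$, then read off from \eqref{eq:T6expandedk} that $m_1(k)$ occurs exactly once with coefficient $+1$ and that $m_2(k)$ does not occur; your extra remarks (that $k\ge 1$ separates $u_3^{k}$ from $u_3^{-k}$, and that no monomial begins with $t_1^{2}$) only spell out the bookkeeping the paper leaves to inspection.
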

\begin{proof}
By Lemma \ref{lem:PsiTarget1}, $\Psi_k\!\left(2T_4(t^{-1},tu^{-k}t^{-1})\right)=2$.
From \eqref{eq:T6expandedk}, we observe that $m_1(k)$ appears exactly once with coefficient $+1$ in
$T_6\!\left(t^{-1},tu^{-k}t^{-1}\right)$, while $m_2(k)$ does not appear. Hence $\Psi_k(T_6)=1$.
Thus $\Psi_k=2+1=3$.
\end{proof}

\begin{lemma}\label{lem:PsiKillsHex}
For all $\nu,\mu\in\langle t,u\rangle$, $$\Psi_k\bigl(H(\nu,\mu)\bigr)=0.$$ In other words, $\Psi_k$ vanishes on the hexagon relations.
\end{lemma}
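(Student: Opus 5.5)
The approach is to make the coefficient extraction completely explicit. Every monomial in $H(\nu,\mu)$ has the form $A_1B_3$, where $A_1$ is a word in $t_1,u_1$ and $B_3$ is a word in $t_3,u_3$. In the free group $F$ the two sub-alphabets are disjoint, so concatenating a reduced $A_1$ with a reduced $B_3$ causes no cancellation. Hence a reduced word of the form $A_1B_3$ determines the pair $(A,B)\in\langle t,u\rangle^2$ uniquely. For the two test words this gives
\[
m_1(k)=(t^{-1})_1\,(tu^{-k}t^{-2})_3,\qquad m_2(k)=(t^{2}u^{k}t^{-1})_1\,(t)_3.
\]
The four monomials of $H(\nu,\mu)$ correspond to the pairs $(\nu,\mu)$, $(\mu^{-1},\nu^{-1})$, $(\nu^{-1},\mu\nu^{-1})$ and $(\nu\mu^{-1},\nu)$, with signs $+,+,-,-$.

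First step: for each of $m_1(k)$ and $m_2(k)$, solve for the pairs $(\nu,\mu)$ for which each of the four monomials equals that word.

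For $m_1(k)$ the solutions are:
\begin{itemize}
\item the first monomial, with sign $+$, when $(\nu,\mu)=(t^{-1},tu^{-k}t^{-2})$;
\item the second, with sign $+$, when $(\nu,\mu)=(t^2u^kt^{-1},t)$;
\item the third, with sign $-$, when $(\nu,\mu)=(t,tu^{-k}t^{-1})$;
\item the fourth, with sign $-$, when $(\nu,\mu)=(tu^{-k}t^{-2},t^2u^{-k}t^{-2})$.
\end{itemize}

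For $m_2(k)$ the solutions are:
\begin{itemize}
\item the first monomial, with sign $+$, when $(\nu,\mu)=(t^2u^kt^{-1},t)$;
\item the second, with sign $+$, when $(\nu,\mu)=(t^{-1},tu^{-k}t^{-2})$;
\item the third, with sign $-$, when $(\nu,\mu)=(tu^{-k}t^{-2},t^2u^{-k}t^{-2})$;
\item the fourth, with sign $-$, when $(\nu,\mu)=(t,tu^{-k}t^{-1})$.
\end{itemize}

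Second step: observe that the two lists involve exactly the same four pairs $(\nu,\mu)$, and for each pair the sign with which $m_1(k)$ appears equals the sign with which $m_2(k)$ appears. These four pairs are pairwise distinct for $k\ge1$, so for a fixed $(\nu,\mu)$ at most one monomial of $H(\nu,\mu)$ equals $m_1(k)$ and at most one equals $m_2(k)$; there are no extra multiplicities. Consequently:
\begin{itemize}
\item for $(\nu,\mu)$ outside these four pairs, both coefficients vanish;
\item for each of the four pairs, both coefficients are equal, namely $\pm1$.
\end{itemize}
Either way $\Psi_k(H(\nu,\mu))=0$, and linearity extends this to all of $\mathcal H$.

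The main obstacle is the uniqueness-of-decomposition claim together with careful bookkeeping of the inversions. I would state explicitly that a word $A_1B_3$ with $A,B$ reduced is itself reduced in $F$, so that equality of monomials forces equality of both factors. I would then verify each of the eight small equations, for example $\nu\mu^{-1}=t^{-1}$ with $\nu=tu^{-k}t^{-2}$ gives $\mu=t\nu=t^2u^{-k}t^{-2}$. Finally I would check that the four solution pairs really are distinct, so that no single hexagon relation contributes twice to one coefficient.
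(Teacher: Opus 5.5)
Your proposal is correct and takes essentially the same route as the paper: a case check of when each of the four hexagon monomials equals $m_1(k)$ or $m_2(k)$. It lands on the same four pairs $(\nu,\mu)$, at each of which both words appear once with the same sign. Your version is slightly more explicit than the paper's: you justify the unique $A_1B_3$ decomposition and solve all eight equations, whereas the paper lists four cases and leaves the other four (e.g.\ term $(2)=m_1(k)$) implicit, since they give the same pairs.
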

\begin{proof}
The Hexagon relation $H(\nu,\mu)$ contains four terms
$$
H(\nu,\mu)=\underbrace{\nu_1\mu_3}_{(1)}+\underbrace{\mu_1^{-1}\nu_3^{-1}}_{(2)}-\underbrace{\nu_1^{-1}\mu_3\nu_3^{-1}}_{(3)}
-\underbrace{\nu_1\mu_1^{-1}\nu_3}_{(4)}.
$$
We check case by case when any of the four terms equals $m_1(k)$ or $m_2(k)$.
As we verify below, in each case, both $m_1(k)$ and $m_2(k)$ occur simultaneously with the same sign with the remaining two terms not equal to $m_1(k)$ or $m_2(k)$. Therefore,
$$
\mathrm{coeff}_{m_1(k)}\bigl(H(\nu,\mu)\bigr)=\mathrm{coeff}_{m_2(k)}\bigl(H(\nu,\mu)\bigr),
$$
for all $\mu, \nu\in \langle t, u\rangle$.

\begin{itemize}
    \item $(1)=m_1(k)$ if and only if $\nu=t^{-1}$ and $\mu=tu^{-k}t^{-2}$. In this case,
$$
(2)=\mu_1^{-1}\nu_3^{-1}=t_1^2u_1^{k}t_1^{-1}t_3=m_2(k).
$$
and
$$
(3)=\nu_1^{-1}\mu_3\nu_3^{-1}=t_1t_3u_3^{-k}t_3^{-1},
\qquad
(4)=\nu_1\mu_1^{-1}\nu_3=t_1u_1^{k}t_1^{-1}t_3^{-1},
$$
so both are not equal to $m_1(k)$ or $m_2(k)$.
\item 
$(1)=m_2(k)$ if and only $\nu=t^2u^{k}t^{-1}$ and $\mu=t$. In this case,
$$
(2)=\mu_1^{-1}\nu_3^{-1}=t_1^{-1}t_3u_3^{-k}t_3^{-2}=m_1(k).
$$
and
$$
(3)=\nu_1^{-1}\mu_3\nu_3^{-1}
=t_1u_1^{-k}t_1^{-2}\,t_3^2u_3^{-k}t_3^{-2},
\qquad
(4)=\nu_1\mu_1^{-1}\nu_3
=t_1^2u_1^{k}t_1^{-2}\,t_3^2u_3^{k}t_3^{-1},
$$
so both are not equal to $m_1(k)$ or $m_2(k)$.

\item $(3)=m_1(k)$, if and only if $\nu=t$ and $\mu=tu^{-k}t^{-1}$. In this case,
$$
(4)=\nu_1\mu_1^{-1}\nu_3=t_1^2u_1^{k}t_1^{-1}t_3=m_2(k).
$$
and
$$
(1)=\nu_1\mu_3=t_1t_3u_3^{-k}t_3^{-1},
\qquad
(2)=\mu_1^{-1}\nu_3^{-1}=t_1u_1^{k}t_1^{-1}t_3^{-1},
$$
so both are not equal to $m_1(k)$ or $m_2(k)$.

\item 
$(4)=m_1(k)$ if and only if $\nu=tu^{-k}t^{-2}$ and $\mu=t^2u^{-k}t^{-2}$. In this case,
$$
(3)=\nu_1^{-1}\mu_3\nu_3^{-1}=t_1^2u_1^{k}t_1^{-1}t_3=m_2(k).
$$
and 
$$
(1)=\nu_1\mu_3=t_1u_1^{-k}t_1^{-2}\,t_3^2u_3^{-k}t_3^{-2},
\qquad
(2)=\mu_1^{-1}\nu_3^{-1}=t_1^2u_1^{k}t_1^{-2}\,t_3^2u_3^{k}t_3^{-1},
$$
so both are not equal to $m_1(k)$ or $m_2(k)$.
\end{itemize}

Therefore we have $\Psi_k(H(\nu,\mu))=0$ for all $\mu, \nu \in\langle t,u \rangle$.
\end{proof}

 The following proposition is in the same spirit as Proposition 5.16 in \cite{niu2025mappingclassgroup4dimensional}.
\begin{proposition}\label{linearinde}
The families
$$
\Bigl\{T_4(t^{-1},tu^{-k}t^{-1})\Bigr\}_{k\ge 1}
$$
and 
$$
\Bigl\{2T_4(t^{-1},tu^{-k}t^{-1})+T_6(t^{-1},tu^{-k}t^{-1})\Bigr\}_{k\ge 1}
$$
are $\mathbb Q$-linearly independent.
\end{proposition}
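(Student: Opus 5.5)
The plan is to prove the statement in the quotient $\mathcal A/\mathcal H$; independence in $\mathcal A$ then follows a fortiori. I will use the family of functionals $\{\Psi_j\}_{j\ge 1}$ from \eqref{eq:PsiDef} as a "dual family" for both sequences. By Lemma~\ref{lem:PsiKillsHex}, each $\Psi_j$ vanishes on $\mathcal H$, so it descends to a linear functional $\mathcal A/\mathcal H\to\mathbb Q$. The key claim is a Kronecker-delta property:
$$
\Psi_j\bigl(T_4(t^{-1},tu^{-k}t^{-1})\bigr)=\delta_{jk},\qquad
\Psi_j\bigl(2T_4(t^{-1},tu^{-k}t^{-1})+T_6(t^{-1},tu^{-k}t^{-1})\bigr)=3\,\delta_{jk},
$$
for all $j,k\ge 1$.

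Granting this, suppose a finite combination $\sum_k c_k x_k$ lies in $\mathcal H$, where $x_k$ denotes the $k$-th member of either family. Applying $\Psi_j$ gives $c_j=0$ in the first case and $3c_j=0$ in the second. Hence every $c_j$ vanishes, which proves the proposition.

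The diagonal cases $j=k$ are exactly Lemma~\ref{lem:PsiTarget1} and Lemma~\ref{lem:PsiTarget2}. For $j\neq k$ I will read off coefficients monomial by monomial from \eqref{eq:T4expandedk} and \eqref{eq:T6expandedk}, using that equality of monomials means equality of reduced words in the free group $F$.

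For $m_2(j)=t_1^{2}u_1^{j}t_1^{-1}t_3$, observe that no monomial in either expansion begins with $t_1^{2}$. Every term begins with $t_1^{\pm1}$ followed by a $t_3$ or $u_1$ letter, or begins with $u_1^{\pm k}$. So $\mathrm{coeff}_{m_2(j)}=0$ throughout.

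For $m_1(j)=t_1^{-1}t_3u_3^{-j}t_3^{-2}$, the only candidates are the monomials of the shape $t_1^{-1}t_3u_3^{\pm k}t_3^{-2}$. In $T_4$ this is $t_1^{-1}t_3u_3^{-k}t_3^{-2}$, which equals $m_1(j)$ only when $j=k$. In $T_6$ there is additionally $t_1^{-1}t_3u_3^{k}t_3^{-2}$, which would require $-j=k$; this is impossible since $j,k\ge1$. The terms ending in $t_3^{-1}$ rather than $t_3^{-2}$ are distinct reduced words and cannot coincide with $m_1(j)$.

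The only point requiring care is this bookkeeping with the $T_6$ expansion: one must make sure that the positive-exponent terms $u^{+k}$ never produce an $m_1(j)$ or $m_2(j)$ for some other $j$. This is exactly where the restriction $k\ge1$ (positivity) is used, and where I expect any subtlety to lie. Beyond that, the argument is a direct triangular (in fact diagonal) pairing between the functionals $\Psi_j$ and the two families.
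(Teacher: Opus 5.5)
Your proposal is correct and is essentially the paper's own proof. Both use the functionals $\Psi_k$, which descend to $\mathcal A/\mathcal H$ by Lemma~\ref{lem:PsiKillsHex}, and both show that $\Psi_k$ pairs with the two families as $\delta_{kj}$ and $3\delta_{kj}$. Your off-diagonal check (no monomial begins with $t_1^{2}$, and the $u_3^{+k}$ terms cannot equal $m_1(j)$ because $j,k\ge 1$) simply spells out what the paper asserts without detail.
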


\begin{proof}
Suppose 
$$
\sum_{j=1}^n q_j\,T_4\!\left(t^{-1},tu^{-j}t^{-1}\right)=0$$ in $\mathcal{A}/\mathcal H$.
For a fixed $k\in\{1,\dots,n\}$, applying $\Psi_k$ leads to 
$$
0
=\sum_{j=1}^n q_j\,\Psi_k\!\left(T_4(t^{-1},tu^{-j}t^{-1})\right).
$$
The term $m_1(k)=t_1^{-1}t_3u_3^{-k}t_3^{-2}$ appears in
$T_4(t^{-1},tu^{-j}t^{-1})$ if and only if $j=k$ with coefficient $+1$ and $m_2(k)$ never appears in any
$T_4(t^{-1},tu^{-j}t^{-1})$. Hence $\Psi_k\!\left(T_4(t^{-1},tu^{-j}t^{-1})\right)=\delta_{kj}$, thus $q_k=0$.
The same process shows that all $q_j$ vanish.

Similarly, suppose 
$$
\sum_{j=1}^n q_j\Bigl(2T_4(t^{-1},tu^{-j}t^{-1})+T_6(t^{-1},tu^{-j}t^{-1})\Bigr)=0$$ in $\mathcal{A}/\mathcal H$. Applying $\Psi_k$ gives rise to
$$
0=\sum_{j=1}^n q_j\,\Psi_k\!\left(2T_4(t^{-1},tu^{-j}t^{-1})+T_6(t^{-1},tu^{-j}t^{-1})\right).
$$
where $m_1(k)$ appears exactly when $j=k$ with 
coefficient $2+1=3$ and $m_2(k)$ never appears. Hence
$$
\Psi_k\!\left(2T_4(t^{-1},tu^{-j}t^{-1})+T_6(t^{-1},tu^{-j}t^{-1})\right)=3\,\delta_{kj},
$$
which implies that $q_k=0$ for all $k=1,\dots,n$.
\end{proof}

\begin{lemma}\label{lem:admTable}
For an admissible pair $(a,c)$ and $i\in\{1,3,4,6\}$,
$$
\mathrm{coeff}_{m_1(k)}\!\bigl(T_i(\overline a,\overline c)\bigr)=
\mathrm{coeff}_{m_2(k)}\!\bigl(T_i(\overline a,\overline c)\bigr)=0.
$$
Therefore $\Psi_k$ vanishes on $\mathcal{S}$.
\end{lemma}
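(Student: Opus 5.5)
The plan is a direct coefficient check, organised around how a word in $F$ splits. Every monomial in $T_1,T_3,T_4,T_6$ (with arguments $(\overline a,\overline c)$) has the form $x_1y_3$, where $x,y\in\langle t,u\rangle$ are each one of $a^{\pm1}$, $c^{\pm1}$ or a product of two of them. For example:
\begin{itemize}
\item $a_1\overline c_3a_3$ has $x=a$ and $y=\overline c a$;
\item $\overline c_1a_1a_3$ has $x=\overline c a$ and $y=a$;
\item $\overline a_1\overline c_1\overline c_3$ has $x=\overline a\,\overline c$ and $y=\overline c$.
\end{itemize}
Since $F$ is free on $t_1,u_1,t_3,u_3$, the letters with subscript $1$ never cancel against those with subscript $3$. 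Hence the reduced form of $x_1y_3$ is the reduced form of $x_1$ followed by that of $y_3$, and $x_1y_3=z_1w_3$ in $F$ if and only if $x=z$ and $y=w$ in $\langle t,u\rangle$. We have
\[
m_1(k)=(t^{-1})_1\,(tu^{-k}t^{-2})_3,\qquad m_2(k)=(t^2u^kt^{-1})_1\,(t)_3 .
\]
So a monomial of some $T_i(\overline a,\overline c)$ equals $m_1(k)$ or $m_2(k)$ exactly when a pair of equations in $\langle t,u\rangle$ holds: $x=t^{-1}$ and $y=tu^{-k}t^{-2}$, or $x=t^2u^kt^{-1}$ and $y=t$.

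For each monomial I will solve this pair of equations for $(a,c)$ and show that no solution is admissible. The solutions are determined uniquely, because in each monomial one of $x,y$ is a single letter-word $a^{\pm1}$ or $c^{\pm1}$, and substituting it into the other equation determines the remaining unknown. Three typical cases:
\begin{itemize}
\item $a_1\overline c_3a_3=m_1(k)$ forces $a=t^{-1}$ and $\overline c=tu^{-k}t^{-1}$, so $c=tu^kt^{-1}$.
\item $\overline c_1\overline a_3=m_1(k)$ forces $c=t$ and $a=t^2u^kt^{-1}$.
\item $a_1\overline c_1\overline c_3=m_1(k)$ forces $c=t^2u^kt^{-1}$ and $a=t^{-1}c=tu^kt^{-1}$.
\end{itemize}
In all of these, $a$ ends with a nonzero power of $t$ and $c$ begins with a nonzero power of $t$, so $(a,c)$ is not admissible. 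The reason is structural: the $1$-part and the $3$-part of both $m_1(k)$ and $m_2(k)$ begin and end with powers of $t$, and the only $u$-block is conjugated by $t$'s. I expect every solved $a$ and $c$ to be of the form $t^{\alpha}u^{\pm k}t^{\beta}$ or $t^{\pm1}$, $t^{\pm 2}$, so that the boundary letters relevant to admissibility are both $t$-powers. Some cases may instead give no solution at all, for instance when $a$ or $c$ would have to be trivial, which is excluded because admissible words are nontrivial.

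Having done this for all terms of $T_1,T_3,T_4,T_6$, against both $m_1(k)$ and $m_2(k)$, both coefficients vanish on every $T_i(\overline a,\overline c)$ with $(a,c)$ admissible. Linearity of $\Psi_k$ then gives $\Psi_k(\mathcal S)=0$.

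The main obstacle is bookkeeping rather than ideas: there are about $36$ monomials times $2$ target words. The delicate point is cases where $x$ or $y$ is a product like $\overline c a$ or $c\,\overline a$, since cancellation can occur there. I will not assume reducedness of such products. Instead I will always solve first for the variable that appears alone, then compute the other by free-group multiplication (for example $c=a\,y^{-1}$), and only afterwards test admissibility. To cut the work, I will use the symmetries visible in the formulas: terms come in pairs related by $c\leftrightarrow\overline c$ or $a\leftrightarrow \overline a$, so each pair can be handled simultaneously, with the same conclusion that the two boundary letters are $t$-powers.
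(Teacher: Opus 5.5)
Your proposal is correct and is essentially the paper's proof. The paper also solves $M(a,c)=m_1(k)$ and $M(a,c)=m_2(k)$ for each of the $21$ distinct monomials of $T_1,T_3,T_4,T_6$, records the unique solutions in Table~\ref{tab:solutionsk}, and observes that every solution has $a$ ending and $c$ beginning with a power of $t$, so none is admissible. Your explicit uniqueness argument via the splitting $x_1y_3$ in $\langle t_1,u_1\rangle*\langle t_3,u_3\rangle$ is left implicit in the paper, and in fact every one of the $42$ cases has a solution, so the ``no solution'' branch you anticipate never occurs.
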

\begin{proof}
In Table~\ref{tab:solutionsk} we list, for each distinct monomial term $M(a,c)$ in $T_1,T_3,T_4,T_6$,
the unique solution pair $(a,c)$ (if there is one) for which $M(a,c)=m_1(k)$ or
$M(a,c)=m_2(k)$. We observe that in every solution, $a$ ends with a nonzero power of $t$ and $c$ begins with a nonzero power of $t$.
Therefore none of these pairs is admissible by Definition~\ref{def:adm}.
Hence, for admissible $(a,c)$, no monomial term in $T_i(\overline a,\overline c)$ can equal $m_1(k)$ or $m_2(k)$,
thus both coefficients are $0$.
\end{proof}

\begin{center}
\setlength{\LTpre}{0pt}
\setlength{\LTpost}{0pt}
\setlength{\tabcolsep}{4pt}
\renewcommand{\arraystretch}{1.10}
\footnotesize
\newcolumntype{P}[1]{>{\raggedright\arraybackslash}p{#1}}
\begin{longtable}{@{}P{2.1cm} P{5.4cm} P{4.3cm} P{4.3cm}@{}}
\caption{Solutions to $M(a,c)=m_1(k)$ and $M(a,c)=m_2(k)$ for monomials $M(a,c)$ appearing in $T_1,T_3,T_4,T_6$.
All solutions are not admissible because $a$ ends with $t^{\pm1}$ and $c$ begins with $t^{\pm1}$.}\label{tab:solutionsk}\\
\toprule
appears in & monomial term $M(a,c)$ & $(a,c)$ if $M=m_1(k)$ & $(a,c)$ if $M=m_2(k)$ \\
\midrule
\endfirsthead
\toprule
appears in & monomial term $M(a,c)$ & $(a,c)$ if $M=m_1(k)$ & $(a,c)$ if $M=m_2(k)$ \\
\midrule
\endhead
\bottomrule
\endlastfoot

$T_{1,3,4,6}$ &
$a_1\,\overline{c}_3\,a_3$ &
$\bigl(t^{-1},\,t u^{k} t^{-1}\bigr)$ &
$\bigl(t^{2}u^{k}t^{-1},\,t^{2}u^{k}t^{-2}\bigr)$ \\

$T_{1,3,4,6}$ &
$\overline{c}_1\,a_1\,a_3$ &
$\bigl(tu^{-k}t^{-2},\,tu^{-k}t^{-1}\bigr)$ &
$\bigl(t,\,t^{2}u^{-k}t^{-2}\bigr)$ \\

$T_{1,4,6}$ &
$\overline{c}_1\,\overline{a}_3$ &
$\bigl(t^{2}u^{k}t^{-1},\,t\bigr)$ &
$\bigl(t^{-1},\,tu^{-k}t^{-2}\bigr)$ \\

$T_{1,4,6}$ &
$\overline{a}_1\,\overline{c}_3$ &
$\bigl(t,\,t^{2}u^{k}t^{-1}\bigr)$ &
$\bigl(tu^{-k}t^{-2},\,t^{-1}\bigr)$ \\

$T_{3,6}$ &
$c_1\,\overline{a}_3\,c_3$ &
$\bigl(tu^{k}t^{-1},\,t^{-1}\bigr)$ &
$\bigl(t^{2}u^{k}t^{-2},\,t^{2}u^{k}t^{-1}\bigr)$ \\

$T_{3,6}$ &
$\overline{a}_1\,c_3\,\overline{a}_3$ &
$\bigl(t,\,tu^{-k}t^{-1}\bigr)$ &
$\bigl(tu^{-k}t^{-2},\,t^{2}u^{-k}t^{-2}\bigr)$ \\

$T_{3}$ &
$c_1\,a_3$ &
$\bigl(tu^{-k}t^{-2},\,t^{-1}\bigr)$ &
$\bigl(t,\,t^{2}u^{k}t^{-1}\bigr)$ \\

$T_{3}$ &
$a_1\,c_3$ &
$\bigl(t^{-1},\,tu^{-k}t^{-2}\bigr)$ &
$\bigl(t^{2}u^{k}t^{-1},\,t\bigr)$ \\

$T_{3,6}$ &
$c_1\,\overline{a}_1\,\overline{a}_3$ &
$\bigl(t^{2}u^{k}t^{-1},\,tu^{k}t^{-1}\bigr)$ &
$\bigl(t^{-1},\,t^{2}u^{k}t^{-2}\bigr)$ \\

$T_{3}$ &
$a_1\,\overline{c}_1\,\overline{c}_3$ &
$\bigl(tu^{k}t^{-1},\,t^{2}u^{k}t^{-1}\bigr)$ &
$\bigl(t^{2}u^{k}t^{-2},\,t^{-1}\bigr)$ \\

$T_{4,6}$ &
$\overline{a}_1\,\overline{c}_3\,\overline{a}_3$ &
$\bigl(t,\,tu^{k}t^{-1}\bigr)$ &
$\bigl(tu^{-k}t^{-2},\,t^{2}u^{k}t^{-2}\bigr)$ \\

$T_{4,6}$ &
$\overline{c}_1\,a_3$ &
$\bigl(tu^{-k}t^{-2},\,t\bigr)$ &
$\bigl(t,\,tu^{-k}t^{-2}\bigr)$ \\

$T_{4}$ &
$a_1\,\overline{c}_3$ &
$\bigl(t^{-1},\,t^{2}u^{k}t^{-1}\bigr)$ &
$\bigl(t^{2}u^{k}t^{-1},\,t^{-1}\bigr)$ \\

$T_{4,6}$ &
$\overline{c}_1\,\overline{a}_1\,\overline{a}_3$ &
$\bigl(t^{2}u^{k}t^{-1},\,tu^{-k}t^{-1}\bigr)$ &
$\bigl(t^{-1},\,t^{2}u^{-k}t^{-2}\bigr)$ \\

$T_{6}$ &
$c_1\,\overline{a}_3$ &
$\bigl(t^{2}u^{k}t^{-1},\,t^{-1}\bigr)$ &
$\bigl(t^{-1},\,t^{2}u^{k}t^{-1}\bigr)$ \\

$T_{6}$ &
$c_1\,a_1\,a_3$ &
$\bigl(tu^{-k}t^{-2},\,tu^{k}t^{-1}\bigr)$ &
$\bigl(t,\,t^{2}u^{k}t^{-2}\bigr)$ \\

$T_{6}$ &
$\overline{c}_1\,\overline{a}_3\,\overline{c}_3$ &
$\bigl(tu^{k}t^{-1},\,t\bigr)$ &
$\bigl(t^{2}u^{k}t^{-2},\,tu^{-k}t^{-2}\bigr)$ \\

$T_{6}$ &
$a_1\,c_3\,a_3$ &
$\bigl(t^{-1},\,tu^{-k}t^{-1}\bigr)$ &
$\bigl(t^{2}u^{k}t^{-1},\,t^{2}u^{-k}t^{-2}\bigr)$ \\

$T_{6}$ &
$\overline{a}_1\,\overline{c}_1\,\overline{c}_3$ &
$\bigl(tu^{-k}t^{-1},\,t^{2}u^{k}t^{-1}\bigr)$ &
$\bigl(t^{2}u^{-k}t^{-2},\,t^{-1}\bigr)$ \\

$T_{6}$ &
$\overline{a}_1\,c_3$ &
$\bigl(t,\,tu^{-k}t^{-2}\bigr)$ &
$\bigl(tu^{-k}t^{-2},\,t\bigr)$ \\

$T_{6}$ &
$\overline{a}_1\,c_1\,c_3$ &
$\bigl(tu^{-k}t^{-1},\,tu^{-k}t^{-2}\bigr)$ &
$\bigl(t^{2}u^{-k}t^{-2},\,t\bigr)$ \\

\end{longtable}
\normalsize
\end{center}

\begin{theorem}\label{thm:main-k}
For every integer $k\ge 1$,
\[W_3^{\Delta_1}\bigl(\Phi_{\mathcal{B}(t\nu_B\nu_Rtu^kt^{-1})}\bigr)=
T_4(t^{-1},tu^{-k}t^{-1})\notin \overline{\mathcal S}
\]
and
\[W_3^{\Delta_2}\bigl(\Phi_{\mathcal{B}(t\nu_B\nu_Rtu^kt^{-1})}\bigr)=
2T_4(t^{-1},tu^{-k}t^{-1})+T_6(t^{-1},tu^{-k}t^{-1})\notin \overline{\mathcal S}.
\]
\end{theorem}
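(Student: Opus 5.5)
The plan is to use the linear functional $\Psi_k$ of \eqref{eq:PsiDef} as an obstruction that separates the target values from $\overline{\mathcal S}$ in the quotient $\mathcal A/\mathcal H$. First I record the equalities in the statement themselves. These are exactly \eqref{eq:target1} and \eqref{eq:target2}, obtained from Lemmas 5.10 and 5.11 of \cite{niu2025mappingclassgroup4dimensional} using the intersection-point count already noted: one Type 4 point for $\Delta_1$, and two Type 4 points plus one Type 6 point for $\Delta_2$. So the real content is the two non-membership claims.

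The key step is to show that $\Psi_k$ descends to a well-defined functional $\overline{\Psi}_k\colon \mathcal A/\mathcal H\to\mathbb Q$. By Lemma~\ref{lem:PsiKillsHex}, $\Psi_k$ vanishes on every hexagon element $H(\nu,\mu)$. Since $\mathcal H$ is by definition the $\mathbb Q$-span of these elements and $\Psi_k$ is linear, $\Psi_k(\mathcal H)=0$, so it factors through $\pi$.

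Next I show $\overline{\Psi}_k$ vanishes on $\overline{\mathcal S}$. Any element of $\overline{\mathcal S}$ is $\pi(s)$ for some finite $\mathbb Q$-linear combination $s$ of the $T_i(\overline a,\overline c)$ with $(a,c)$ admissible and $i\in\{1,3,4,6\}$. Lemma~\ref{lem:admTable} shows $\Psi_k$ kills each such generator, hence $\overline{\Psi}_k(\pi(s))=\Psi_k(s)=0$.

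Finally I evaluate on the targets. Lemma~\ref{lem:PsiTarget1} gives
\[
\overline{\Psi}_k\bigl(\pi(T_4(t^{-1},tu^{-k}t^{-1}))\bigr)=1,
\]
and Lemma~\ref{lem:PsiTarget2} gives
\[
\overline{\Psi}_k\bigl(\pi(2T_4(t^{-1},tu^{-k}t^{-1})+T_6(t^{-1},tu^{-k}t^{-1}))\bigr)=3.
\]
Both values are nonzero, while $\overline{\Psi}_k$ is identically zero on $\overline{\mathcal S}$, so neither class lies in $\overline{\mathcal S}$. This holds for each $k\ge1$ separately, since $\Psi_k$ is defined using the monomials $m_1(k)$ and $m_2(k)$ for that same $k$.

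Given the lemmas, the main obstacle is only bookkeeping. One must make sure the comparison is carried out in $\mathcal A/\mathcal H$, not in $\mathcal A$, because an element of $\mathcal A$ outside $\mathcal S$ could still be congruent modulo $\mathcal H$ to something in $\mathcal S$. This is exactly why the well-definedness of $\overline{\Psi}_k$ comes first. The genuinely delicate inputs, the exhaustive case analysis in Lemma~\ref{lem:PsiKillsHex} and the table in Lemma~\ref{lem:admTable}, have already been established, so I would simply cite them.
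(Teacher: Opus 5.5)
Your proposal is correct and follows the paper's own argument. The paper's proof just cites Lemmas~\ref{lem:PsiTarget1}, \ref{lem:PsiTarget2}, \ref{lem:PsiKillsHex} and \ref{lem:admTable}; you make explicit how they combine: $\Psi_k$ factors through $\mathcal A/\mathcal H$ because it kills every $H(\nu,\mu)$, it vanishes on $\overline{\mathcal S}$ because it kills every admissible $T_i(\overline a,\overline c)$, and it takes the nonzero values $1$ and $3$ on the two targets.
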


\begin{proof}
This follows from Lemma \ref{lem:PsiTarget1} 
Lemma \ref{lem:PsiTarget2}, Lemma \ref{lem:PsiKillsHex} and Lemma \ref{lem:admTable}.
\end{proof}
In particular, for $i=1,2$ and $k\in \mathbb{Z}^{+}$, the family 
\[\bigl\{W_3'^{\Delta_i}\bigl(\Phi_{\mathcal{B}(t\nu_B\nu_Rtu^kt^{-1})}\bigr)\bigr \},\]
are non-zero and linearly independent.

Combining Proposition \ref{prop:admissible-span}, Proposition~\ref{linearinde} and Theorem~\ref{thm:main-k} gives Theorem~\ref{thm:A}.

\section{Proof of Theorem \ref{cor:B}}
\label{thbproof}
We prove Theorem \ref{cor:B} in this section.

\begin{proof}[Proof of Theorem \ref{cor:B}]

Let $[\Phi_k]\coloneqq [\Phi_{\mathcal{B}(t\nu_B\nu_R tu^kt^{-1})}]\in \pi_0\mathrm{Diff}(Y,\partial)$.
Assume for contradiction that the pair $(\Phi_k(\Delta_1),\Phi_k(\Delta_2))$ is isotopic to $(\Delta_1,\Delta_2)$.
Choose an isotopy $F\colon (D^3,D^3)\times I \to Y$ with $F_0=(\Delta_1,\Delta_2)$ and $F_1=(\Phi_k(\Delta_1),\Phi_k(\Delta_2))$.
By isotopy extension, there is an ambient isotopy $\overline{F}\colon Y\times I\to Y$
with $\overline{F}_0=\mathrm{Id}$ and $\overline{F}_1(\Delta_1,\Delta_2)=(\Phi_k(\Delta_1),\Phi_k(\Delta_2))$.
Then $\overline{F}_1^{-1}\circ \Phi_k$ is isotopic to $\Phi_k$ (rel $\partial Y$) and fixes $(\Delta_1,\Delta_2)$ pointwise.
Replacing $\Phi_k$ by this isotopic representative, we assume from now on that $\Phi_k|_{\Delta_i}=\mathrm{Id}$. By a further local straightening (supported in a small tubular neighbourhood of $(\Delta_1,\Delta_2)$), we may moreover assume that
$\Phi_k$ fixes an open product neighbourhood of $(\Delta_1,\Delta_2) $ pointwise. Set
\[
W\coloneqq Y\setminus \nu(\Delta_1\cup \Delta_2).
\]
Then $\Phi_k$ restricts to an element of $\mathrm{Diff}(W,\partial W)$ fixing all boundary components pointwise. 

Cap off the two boundary components of $Y$ 
by gluing in $S^2\times D^2$. Denote the resulting manifold by $M$.
By construction, $M$ is diffeomorphic to $S^4$, and the extension of $\Phi_k$ by the identity over
the attached $S^2\times D^2$ is isotopic to $\mathrm{Id}$ in $\mathrm{Diff}(M)$.

There are embedded $4$--disks $B_1,B_2\subset \mathrm{Int}(M)$ with
\[
W \cong M\setminus \operatorname{int}(B_1\cup B_2),
\]
and under this identification $\Phi_k|_W$ fixes $\partial (B_1\cup B_2)$ pointwise.
Equivalently, extending $\Phi_k|_W$ by the identity over $B_1$ and $B_2$ gives a diffeomorphism
\[
\widehat{\Phi}_k \in \mathrm{Diff}(M\ \mathrm{rel}\ B_1,B_2),
\]
whose image in $\pi_0\mathrm{Diff}(M)$ is trivial. Let $B=(B_1,B_2)$. The mapping class $[\widehat{\Phi}_k]$ lies in the kernel
\[
DS_B(M)\coloneqq \ker\bigl(\pi_0\mathrm{Diff}(M\ \mathrm{rel}\ B)\to \pi_0\mathrm{Diff}(M)\bigr),
\]
which Lucas calls the \emph{disk slide group} (see \cite[\S2.1]{lucas2025isotopyversusequivariantisotopy}). By Lucas' construction, there is a natural surjection
\[
DS_B \colon \pi_1(\mathrm{Fr}_B(M))\twoheadrightarrow DS_B(M)
\]
(\cite[Proposition~2.2]{lucas2025isotopyversusequivariantisotopy}) where $\mathrm{Fr}_B(M)\subset\mathrm{Conf}_2(\mathrm{Fr}(M))$ is the sub-bundle consisting of pairs of frames over two distinct points.
Choose $\gamma\in \pi_1(\mathrm{Fr}_B(M))$ with $DS_B(\gamma)=[\widehat{\Phi}_k]$.
Lucas also defines a map
\[
\rho\colon \pi_1(\mathrm{Fr}_B(M))\to \pi_1(M,p)
\]
and proves that there is an exact sequence
\[
\pi_1(SO(4)) \longrightarrow \pi_1(\mathrm{Fr}_B(M)) \xrightarrow{\rho} \pi_1(M,p) \longrightarrow 1,
\]
together with the identification that the image of $\pi_1(SO(4))$ under $DS_B$ is generated by the boundary sphere twist
(\cite[Proposition~2.6 (i)]{lucas2025isotopyversusequivariantisotopy}).




Since $\pi_1(M,p)$ is trivial, the class $[\widehat{\Phi}_k]=DS_B(\gamma)$ lies in the subgroup generated by the boundary sphere twist about $\partial B$.
Since $\pi_1(SO(4))\cong \mathbb{Z}/2$, this subgroup has order dividing $2$; hence
\[
[\widehat{\Phi}_k]^2=1 \quad\text{in}\quad \pi_0\mathrm{Diff}(M\ \mathrm{rel}\ B).
\]
Restricting back to $W=M\setminus \operatorname{int}(B)$ and then viewing $W\subset Y$, we conclude that
$\Phi_k^2$ is isotopic to the identity on $W$ (rel $\partial W$).

Finally, since $\Phi_k$ fixes $\nu(\Delta_i)$ pointwise, we can extend this isotopy over $\nu(\Delta_i)$ by the identity,
obtaining an isotopy of $\Phi_k^2$ to $\mathrm{Id}$ in $\mathrm{Diff}(Y,\partial Y)$.
Hence $[\Phi_k]^2=1$ in $\pi_0\mathrm{Diff}(Y,\partial)$. However, $[\Phi_k]$ has infinite order: since $W_3^{\Delta_i}$ is a homomorphism and we have \[W_3'^{\Delta_i}([\Phi_k]^n) = nW_3'^{\Delta_i}([\Phi_k])\]
for all $n\in \mathbb{Z}^{+}\neq 0$.
\end{proof}
\clearpage
\bibliographystyle{abbrv}
\bibliography{sample}

@misc{Budney-gabai,
      title={Knotted 3-balls in {$S^4$}}, 
      author={Ryan Budney and David Gabai},
      year={2021},
      note={arXiv preprint: 1912.09029},
      archivePrefix={arXiv},
      primaryClass={math.GT},
      url={https://arxiv.org/abs/1912.09029}, 
}

@misc{Budney-gabai2,
      title={On the automorphism groups of hyperbolic manifolds}, 
      author={Ryan Budney and David Gabai},
      year={2023},
 note={arXiv preprint: 2303.05010},
      eprint={2303.05010},
      archivePrefix={arXiv},
      primaryClass={math.GT},
      url={https://arxiv.org/abs/2303.05010}, 
}

@article{danica,
  author  = {Kosanovi{\'c}, Danica},
  title   = {On homotopy groups of spaces of embeddings of an arc or a circle: the Dax invariant},
  journal = {Transactions of the American Mathematical Society},
  volume  = {377},
  number  = {2},
  pages   = {775--805},
  year    = {2024},
  doi     = {10.1090/tran/8805}
}

@misc{niu2025mappingclassgroup4dimensional,
      title={On the mapping class group of 4-dimensional 1-handlebodies via {B}udney-{G}abai invariants}, 
      author={Weizhe Niu},
      year={2025},
 note={arXiv preprint: 2512.15099},
      eprint={2512.15099},
      archivePrefix={arXiv},
      primaryClass={math.GT},
      url={https://arxiv.org/abs/2512.15099}, 
}

@misc{lucas2025isotopyversusequivariantisotopy,
      title={Isotopy versus equivariant isotopy in dimensions three and higher}, 
      author={Trent Lucas},
      year={2025},
      note={arXiv preprint: 2508.11104},
      eprint={2508.11104},
      archivePrefix={arXiv},
      primaryClass={math.GT},
      url={https://arxiv.org/abs/2508.11104}, 
}

@misc{powell2025spanning3discs4spherepushed,
      title={Spanning 3-discs in the 4-sphere pushed into the 5-disc}, 
      author={Mark Powell},
      year={2025},
        note={arXiv preprint: 2512.05952},
      eprint={2512.05952},
      archivePrefix={arXiv},
      primaryClass={math.GT},
      url={https://arxiv.org/abs/2512.05952}, 
}

@misc{lin2025mappingclassgroups4manifolds,
      title={On the mapping class groups of 4-manifolds with 1-handles}, 
      author={Jianfeng Lin and Yi Xie and Boyu Zhang},
      year={2025},
              note={arXiv preprint: 2501.11821},
      eprint={2501.11821},
      archivePrefix={arXiv},
      primaryClass={math.GT},
      url={https://arxiv.org/abs/2501.11821}, 
}

\bigskip
\noindent\textsc{Yau Mathematical Sciences Center, Tsinghua University}\\
\textit{Email:} \texttt{weizheniu@mail.tsinghua.edu.cn}

\end{document}